\documentclass[journal,twoside,web]{./ieeecolor}
\usepackage{generic}
\usepackage{cite}
\usepackage{amsmath,amssymb,amsfonts}
\usepackage{algorithmic}
\usepackage{graphicx}


\usepackage{tikz}

\usepackage{amsthm}

\newcommand\placeqed{\nobreak\hfill$\square$}

\newtheorem{theorem}{Theorem}
\newtheorem{corollary}{Corollary}
\newtheorem{lemma}{Lemma}

\newtheorem{proposition}{Proposition}

\theoremstyle{definition}
\newtheorem{definition}{Definition}
\newtheorem{problem}{Problem}
\newtheorem{assumption}{Assumption}

\theoremstyle{remark}
\newtheorem{remark}{Remark}

\usepackage{amsmath}
\usepackage{amssymb}

\usepackage[T1]{fontenc}
\usepackage{calligra}

\usepackage{stix}

\usepackage{dsfont}
\usepackage{hyperref}
\usepackage[inline]{enumitem}
\usepackage{graphicx}
\usepackage{caption}
\usepackage{subcaption}
\usepackage{tabularx}
\usepackage{booktabs}

\newcommand{\transp}{^\mathsf{T}}

\DeclareMathOperator*{\cart}{\scalerel*{\times}{\sum}}
\usepackage{scalerel}

\newenvironment{Blockmatrix}{
    \newcommand{\block}[6][draw]{
        \path[##1] (##3-1, -##2+1) rectangle (##5, -##4) node[pos=0.5] {$##6$};
    }
    \tikzpicture[
        x=14pt+width("C"), y=14pt+height("C"), 
        baseline={([yshift=-1ex]current bounding box.center)}
    ]
}{
    \endtikzpicture
}


\def\BibTeX{{\rm B\kern-.05em{\sc i\kern-.025em b}\kern-.08em
    T\kern-.1667em\lower.7ex\hbox{E}\kern-.125emX}}
\markboth{\journalname, VOL. XX, NO. XX, XXXX 202X}
{El-Kebir \MakeLowercase{\textit{et al.}}: Online Guaranteed Reachable Set Approximation for Systems with Changed Dynamics and Control Authority}
\begin{document}
\title{Online Guaranteed Reachable Set Approximation for Systems with Changed Dynamics and Control Authority}
\author{Hamza El-Kebir, \IEEEmembership{Student Member, IEEE}, Ani Pirosmanishvili, Melkior Ornik, \IEEEmembership{Member, IEEE}
\thanks{Manuscript received September 25, 2021. }
\thanks{H. El-Kebir is with the Dept. of Aerospace Engineering at the University of Illinois Urbana-Champaign, Urbana, IL 61801 USA (e-mail: elkebir2@illinois.edu). }
\thanks{A. Pirosmanishvili is with the Dept. of Aerospace Engineering at the University of Illinois Urbana-Champaign, Urbana, IL 61801 USA (e-mail: anip2@illinois.edu). }
\thanks{M. Ornik is with the Dept. of Aerospace Engineering and the Coordinated Science Laboratory at the University of Illinois Urbana-Champaign, Urbana, IL 61801 USA (e-mail: mornik@illinois.edu).}}

\maketitle

\begin{abstract}
This work presents a method of efficiently computing inner and outer approximations of forward reachable sets for nonlinear control systems with changed dynamics and diminished control authority, given an a priori computed reachable set for the nominal system. The method functions by shrinking or inflating a precomputed reachable set based on prior knowledge of the system's trajectory deviation growth dynamics, depending on whether an inner approximation or outer approximation is desired. These dynamics determine an upper bound on the minimal deviation between two trajectories emanating from the same point that are generated on the nominal system using nominal control inputs, and by the impaired system based on the diminished set of control inputs, respectively. The dynamics depend on the given Hausdorff distance bound between the nominal set of admissible controls and the possibly unknown impaired space of admissible controls, as well as a bound on the rate change between the nominal and off-nominal dynamics. Because of its computational efficiency compared to direct computation of the off-nominal reachable set, this procedure can be applied to on-board fault-tolerant path planning and failure recovery. In addition, the proposed algorithm does not require convexity of the reachable sets unlike our previous work, thereby making it suitable for general use. We raise a number of implementational considerations for our algorithm, and we present three illustrative examples, namely an application to the heading dynamics of a ship, a lower triangular dynamical system, and a system of coupled linear subsystems.
\end{abstract}

\begin{IEEEkeywords}
Reachability analysis, guaranteed reachability, safety-critical control, computation and control.
\end{IEEEkeywords}

\section{Introduction}

\IEEEPARstart{R}{eachability} analysis forms a fundamental part of dynamical system analysis and control theory, providing a means to assess the set of states that a system can reach under admissible control inputs at a certain point in time from a given set of initial states. Inner approximations of reachable sets are often used to attain a guaranteed estimate of the system's capabilities, while outer approximations can be used to verify that the system will not reach an unsafe state.

Outer approximations find widespread applications in fault-tolerance analysis and formal verification \cite{Blanke2006}, safe trajectory planning \cite{Vaskov2019}, and constrained feedback controller synthesis \cite{Coogan2020}. Methods for computing outer approximations of the reachable set include polynomial overapproximation \cite{Althoff2013}, direct set propagation \cite{Althoff2021}, and viscosity solutions to Hamilton--Jacobi--Bellman (HJB) equations \cite{Bansal2017}.

Inner approximations of reachable sets have received comparatively less attention than outer approximations \cite{Goubault2019}, but have recently seen use in path-planning problems with collision avoidance \cite{Schoels2020a}, as well as viability kernel computation \cite{Kaynama2012}, which can in turn be used for guaranteed trajectory planning \cite{Liniger2019}. Another application lies in safe set determination, in which one aims to obtain an inner approximation of the maximal robust control invariant set \cite{Gruber2021}. Methods for determining inner approximations of reachable sets have been based on various principles, including relying on polynomial inner approximation of the nonlinear system dynamics using interval calculus \cite{Goubault2017}, ellipsoid calculus \cite{Filippova2016}, and viscosity solutions to HJB equations \cite{Xue2020}. One major drawback of these methods is that they are computationally intensive and are often only suitable for systems of low dimension, making them ill-suited for online use.

In this work, we consider the problem of obtaining meaningful approximations of the reachable set of an off-nominal system by leveraging available a priori information on the nominal system dynamics. Here, we consider the reachable set of the nominal system, or an inner/outer approximation thereof, to be known prior to the system's operation. While obtaining reachable sets is often a computationally intensive task, it is often done during the design phase of a system, where computation times are less of a concern \cite{Lombaerts2013}. We then consider a change in dynamics of the system, for example due to partial system failure, which turns the nominal system into the \emph{off-nominal} system. Our goals is to obtain inner and outer approximations of the off-nominal reachable sets based on the nominal reachable set, in a way that can be applied in real-time.

In \cite{El-Kebir2021a}, the case in which the system experiences \emph{diminished control authority} was considered, i.e., its set of admissible control inputs has shrunk with respect to that of the nominal system. In addition, stringent restrictions on the family of system that could be considered were made in \cite{El-Kebir2021a}, in particular due to the requirement that the reachable set for the nominal \emph{and} off-nominal set be convex. This ultimately limits the applicability of the theory presented there to a limited set of problems.

Here, we do not impose any demands on the convexity of the reachable set, while still presenting an algorithm that can be run in real-time. This latter generalization to nonconvex sets requires a significant shift in the way we reason about the minimum deviation between trajectories of the nominal and off-nominal system. In this work, we also consider a change in the system dynamics, and provide methods for obtaining tighter inner \emph{and} outer approximations for the off-nominal reachable set with respect to what the theory of \cite{El-Kebir2021a} provides. This latter improvement follows from the fact that the theory in \cite{El-Kebir2021a} considers the trajectory deviation as expressed by the norm of the states, whereas here we  separately consider the deviation in single dimensions of the state.

To obtain inner and outer approximations of the off-nominal reachable set, we consider that an upper bound on the minimal rate of change of the trajectory deviation between the off-nominal system's trajectories with respect to those of the nominal system's is known, with both trajectories emanating from the same point. These growth dynamics provide an upper bound on the minimal rate of change between two trajectories emanating from the same point, with one trajectory being generated by the nominal set of control inputs, and the other by the off-nominal set of control inputs and the off-nominal dynamics. An upper bound on these growth dynamics can be obtained analytically during the design phase, and allows us to obtain an inner approximation to the off-nominal system's reachable set at low computational cost, in an online manner.

While other methods have been proposed to compute reachable sets under system impairment, due to their computational complexity, these have either used reduced order models, or have been limited to offline applications \cite{Norouzi2020}. In more extreme cases of system impairments, such as those were very little is known about the system's present capabilities, more conservative methods for computing reachable sets exist \cite{Shafa2021}. Here, we present a general algorithm that yields guaranteed inner and outer approximations, given limited knowledge of the failure modes as expressed by a bound on the trajectory deviation growth dynamics. We leverage the fact that the off-nominal system dynamics are related to the nominal system's dynamics, allowing us to leverage reachable sets computed for the nominal system, unlike in \cite{Shafa2021}. Given a sufficiently tight deviation growth bound, our approach can be applied online to high dimensional systems with no additional computational cost for the growth in system dimension.
 

The paper is organized as follows. First, we present preliminary theory in Section~\ref{sec:prelim}. Then, we present our main results Section~\ref{sec:main}, followed by a simulation example involving the heading dynamics of ship, as well as two general scalable system examples, in Section~\ref{sec:simulations}. Finally, we draw conclusions in Section~\ref{sec:conclusions}. In Appendix~\ref{app:generalizations}, we present a slightly more relaxed set of assumptions under which the theory presented continues to hold.

\section{Preliminaries}
\label{sec:prelim}

In the following, we denote by $\Vert \cdot \Vert$ the Euclidean norm. Given two sets $A, B \subseteq \mathbb{R}^n$, we denote by $A \oplus B$ Minkowski sum $\{a + b : a \in A, b \in B\}$. We denote a ball centered around the origin with radius $r > 0$ as $\mathcal{B}_r$. By $\mathcal{B}(x, r)$ we denote $\{x\} \oplus \mathcal{B}_r$. We denote by `$\cart$' the Cartesian product. We define $\mathbb{R}_+ := [0, \infty)$. We define the distance between two sets $A, B \subseteq \mathbb{R}^n$ to be
\begin{equation}\label{eq:distance between sets}
    d(A, B) := \sup_{a \in A} \inf_{b \in B} d(a, b),
\end{equation}
where $d$ is the Euclidean metric. We denote the Hausdorff distance as
\begin{equation}\label{eq:Hausdorff distance}
    d_{\mathrm{H}} (A, B) := \max\{d(A, B), d(B, A)\},    
\end{equation}
where $d$ is the Euclidean metric. An alternative characterization of the Hausdorff distance reads \cite[pp.~280--281]{Munkres2000}:
\begin{equation}\label{eq:Hausdorff characterization}
    d_{\mathrm{H}} (A, B) = \inf \{ \rho \geq 0 : A \subseteq B_{+\rho}, B \subseteq A_{+\rho} \},
\end{equation}
where $X_{+\rho}$ denotes the \emph{$\rho$-fattening} of $X$, i.e., $X_{+\rho} := \bigcup_{x \in X} \{ y \in \mathbb{R}^n : \Vert x - y \Vert \leq \rho \}$.

 Given a point $x \in S$ and a set $A \subseteq S$, we denote $d(x, A) := \inf_{y \in A} d(x, y)$. We denote by $\partial A$ the boundary of $A$ in the topology induced by the Euclidean norm. For a function $g : A \to B$, we denote by $g^{-1}$ the inverse of this function if an inverse exists, and by $\mathrm{dom}(g)$ the domain of the function (in this case $A$). We denote a \emph{multifunction} by $G : A \rightrightarrows B$, where $G$ maps elements of $A$ to subsets of $B$. Given a multifunction $G$, we define a \emph{differential inclusion} as being the set of ordinary differential equations $\dot{x} \in G(x)$ that have velocities in $G(x)$.
 
 Given two vectors $u, v \in \mathbb{R}^n$, we denote by $u \preceq v$ a component-wise nonstrict inequality, i.e., $u_i \leq v_i$ for $i = 1, \ldots, n$. By $|u|$, we denote a component-wise absolute value, such that $|u|_i = |u_i|$. Given a set $Y$, we denote its cardinality by $\# Y$. We use the abbreviation `a.e.' (almost every) to refer to statements that are true everywhere except potentially on some zero-measure sets. Given a real value $a \in \mathbb{R}$ we denotes its \emph{ceiling} by $\lceil a \rceil = \min ([a, \infty) \cap \mathbb{N})$. For a closed interval $[a, b] \subseteq \mathbb{R}$, we denote its length by $\mathrm{length}([a, b]) := b - a$. Given $N$ matrices $A^{(1)}, \ldots, A^{(N)}$, with $A^{(i)} \in \mathbb{R}^{n_i \times m_i}$ for each $i = 1, \ldots, n$, we define $\mathrm{diag}(\{ A^{(1)}, \ldots, A^{(N)} \})$ to be the block diagonal matrix formed by these matrices.
 

\subsection{Problem Statement}

Consider a dynamical system of the form
\begin{equation}\label{eq:nominal system dynamics}
\begin{split}
    \dot{x}(t) &= f(x(t), u(t)), \\
    x(0) &= x_0,
\end{split}
\end{equation}

where $t \geq 0$, $x \in \mathbb{R}^n$ is the state, and $u \in \mathscr{U} \subseteq \mathbb{R}^m$ is the control input, where $\mathscr{U}$ is some admissible set of control inputs. The dynamics have the form $f : \mathbb{R}^n \times \mathscr{U} \to \mathbb{R}^n$. We refer to these dynamics as the `nominal' dynamics.

We consider an impairment in the system dynamics, as well as the system's control authority, such that $\bar{u}(t) \in \bar{\mathscr{U}} \subseteq \mathscr{U} \subseteq \mathbb{R}^m$. The modified dynamics then read:
\begin{equation}\label{eq:off-nominal system dynamics}
\begin{split}
    \dot{\bar{x}}(t) &= g(\bar{x}(t), \bar{u}(t)), \\
    \bar{x}(0) &= x_0.
\end{split}
\end{equation}

We refer to these modified dynamics as the `off-nominal' dynamics.

\begin{definition}[Forward reachable set]
    We define a function $\phi : \mathbb{R}_+ \to \mathscr{U}$ as an \emph{admissible input signal}, if a unique solution to \eqref{eq:nominal system dynamics} exists given that input signal. The set of admissible control signals is defined as all possible admissible input signals $\mathbb{U} := \{\phi : \mathbb{R}_+ \to \mathscr{U}\}$.    
    
    We define a \emph{trajectory} $\varphi : \mathbb{R}_+ \times \mathbb{R}^n \times \mathbb{U} \to \mathbb{R}^n$ to be such that $x(t) = \varphi(t | x_0, \phi)$ satisfies \eqref{eq:nominal system dynamics} given initial state $x(0) = x_0 \in \mathbb{R}^n$ and input signal $u(t) = \phi(t) \in \mathbb{U}$, i.e.,
    \begin{equation*}
        \varphi(t | x_0, \phi) := x_0 + \int_0^t f(x(\tau), \phi(\tau)) \ \mathrm{d}\tau.
    \end{equation*}
    
    From the dynamics of \eqref{eq:nominal system dynamics}, we define a multifunction $F(t, x) := f(t, x, \mathscr{U}) : \mathbb{R}_+ \times \mathbb{R}^n \rightrightarrows \mathbb{R}^n$. This multifunction defines an \emph{ordinary differential inclusion} $\dot{x}(t) \in F(t, x(t))$, of which any instance of \eqref{eq:nominal system dynamics} is a part. We define the \emph{solution set} of this ordinary differential inclusion as follows:
    \begin{equation*}
        S_F (x_0) := \{ \varphi(\cdot | x_0, \phi) : x_0 \in \mathscr{X}_0, \phi \in \mathbb{U} \}.
    \end{equation*}

    Given a set of initial states $\mathscr{X}_0 \subseteq \mathbb{R}^n$, we define the \emph{forward reachable set} (FRS) at time $\tau \in \mathbb{R}_+$ as
    \begin{equation*}
    \begin{split}
        \mathbb{X}_\tau^\rightarrow (F, \mathscr{X}_0) &:= \bigcup_{x \in \mathscr{X}_0} \{x(\tau) : x \in S_F (x_0)\} \\
        &= \{\varphi(\tau | x_0, \phi) : x_0 \in \mathscr{X}_0, \phi \in \mathbb{U}\}. 
    \end{split}
    \end{equation*}
\end{definition}

We consider the following main problem, comprised of two parts: one relating to obtaining inner approximations of reachable sets, and the other concerned with obtaining outer approximations of reachable sets. In this work, we treat both the case of impaired control authority, as well as changed dynamics, simultaneously.

\begin{problem}[Off-nominal FRS approximation]
    Given the nominal dynamics $\dot{x}(t) = f(x(t), u(t)),$ the off-nominal dynamics $\dot{\bar{x}}(t) = g(\bar{x}(t), \bar{u}(t)),$ a set of admissible control inputs $\mathscr{U}$, (an inner (\emph{outer}) approximation of the) forward reachable set $\mathbb{X}_\tau^\rightarrow$ at time $\tau$, and the corresponding initial set of states $\mathscr{X}_f$, find an inner (\emph{outer}) approximation of the reachable set at time $\tau$, $\bar{\mathbb{X}}_\tau^\rightarrow$, for the dynamics $\dot{\bar{x}}(t) = g(\bar{x}(t), \bar{u}(t))$ and the admissible control inputs $\bar{\mathscr{U}} = h(\mathscr{U})$, for some control mapping $h : \mathscr{U} \to \bar{\mathscr{U}}$.
\end{problem}

As mentioned in the introduction, inner approximations of the off-nominal reachable set are useful for safety critical control, when guaranteed reachability is demanded. However, when dealing with collision avoidance, outer approximations of the off-nominal reachable set of a moving target are needed. This justifies the need for two separate approximation objectives.

\subsection{Generalized Nonlinear Trajectory Deviation Growth Bound}

As mentioned in the introduction, we wish to find an upper bound on the minimum normed distance between two trajectories emanating from the same, once governed by \eqref{eq:nominal system dynamics}, and the other by \eqref{eq:off-nominal dynamics}. We call this upper bound the \emph{trajectory deviation growth bound}. To this end, we first consider a means of obtaining and upper bound to the norm of the solution of a given ordinary differential equation (ODE). This particular ODE will be described by the rate of change of the deviation between two trajectories, which we refer to as the \emph{trajectory deviation growth dynamics}, as will be described shortly.




We consider the following general nonlinear time-varying dynamics
\begin{equation}\label{eq:time-varying dynamics}
    \dot{x}(t) = h(t, x(t), u(t)), \quad x(0) = x_0 \in \mathbb{R}^n,
\end{equation}

Our goal is to find an upper bound for the magnitude of $x(t)$, given particular assumptions on the form of control input $u$ and the function $h$, over a finite period of time. We make the following assumption on the growth rate of $x$:
\begin{assumption}\label{assumption:nonlinear growth condition}
    For all $x \in \mathbb{R}^n$, $u \in \mathscr{U}$, $t_0 \leq t < \infty$
    \begin{equation*}
        \Vert h(t, x, u) \Vert \leq a(t) w(\Vert x \Vert, \Vert u \Vert) + b(t),
    \end{equation*}
    where $a, b$ are continuous and positive and $w$ is continuous, monotonic, nondecreasing and positive. In addition, $w$ is uniformly monotonically nondecreasing in $\Vert u \Vert$.
\end{assumption}

\begin{theorem}[Extended Bihari inequality {\cite[Theorem~3.1]{El-Kebir2021a}}]\label{thm:generalized Bihari inequality for controlled dynamical systems}
    Let $x(t)$ be a solution to the equation
    \begin{equation*}
        \dot{x} = h(t, x, u), \quad 0 \leq t_0 \leq t < \infty,
    \end{equation*}
    where $h(t, x, u) : [t_0, \infty) \times \mathbb{R}^n \times \mathscr{U} \to \mathbb{R}^n$ is continuous for $t_0 \leq t < \infty$, and $\mathscr{U} \subseteq \mathbb{R}^m$ is compact and satisfies $\max_{u \in \mathscr{U}} \Vert u \Vert = \delta$. Let Assumption~\ref{assumption:nonlinear growth condition} hold. Then,
    \begin{equation}\label{eq:Bihari inequality}
        \Vert x(t) \Vert \leq G^{-1} \left[ G\left( \Vert x(t_0) \Vert + \int_{t_0}^t b(\tau) \mathrm{d}\tau \right) + \int_{t_0}^t a(\tau) \mathrm{d}\tau \right],
    \end{equation}
    where the expression on the right-hand side is strictly increasing in $t$. In \eqref{eq:Bihari inequality}, we define
    \begin{equation*}\label{eq:big G integral}
        G(r) := \int_{r_0}^r \frac{\mathrm{d}s}{w(s, \delta)}, \quad r > 0, r_0 > 0,
    \end{equation*}
    for arbitrary $r_0 > 0$ and for all $t \geq t_0$ for which it holds that
    \begin{equation*}
        G\left( \Vert x(t_0) \Vert + \int_{t_0}^t b(\tau) \mathrm{d}\tau \right) + \int_{t_0}^t a(\tau) \mathrm{d}\tau \in \mathrm{dom}(G^{-1}).
    \end{equation*}
\end{theorem}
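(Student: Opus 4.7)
\medskip

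\noindent\textbf{Proof plan.}
The approach is a Bihari-type comparison argument in which the additive forcing $b(t)$ is absorbed into a time-varying ``initial value''. First I would integrate \eqref{eq:time-varying dynamics} from $t_0$ to $t$, take norms inside the integral, and apply Assumption~\ref{assumption:nonlinear growth condition}; using $\Vert u(\tau) \Vert \leq \delta$ together with the uniform monotonicity of $w$ in its second argument this yields the scalar integral inequality
\begin{equation*}
    \Vert x(t) \Vert \leq \Vert x(t_0) \Vert + \int_{t_0}^t b(\tau)\, \mathrm{d}\tau + \int_{t_0}^t a(\tau)\, w(\Vert x(\tau) \Vert, \delta)\, \mathrm{d}\tau.
\end{equation*}

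Next I would introduce the auxiliary quantities $z(t) := \Vert x(t_0) \Vert + \int_{t_0}^t b(\tau)\, \mathrm{d}\tau$ and $v(t) := z(t) + \int_{t_0}^t a(\tau)\, w(\Vert x(\tau) \Vert, \delta)\, \mathrm{d}\tau$. Both are absolutely continuous and nondecreasing by positivity of $a$, $b$, $w$; moreover $\Vert x(t) \Vert \leq v(t)$ and $v(t) \geq z(t) \geq \Vert x(t_0) \Vert > 0$. Differentiating $v$ and using monotonicity of $w$ in its first argument gives $\dot v(t) \leq \dot z(t) + a(t)\, w(v(t), \delta)$. Dividing by $w(v(t), \delta) > 0$ and bounding $\dot z(t)/w(v(t),\delta) \leq \dot z(t)/w(z(t),\delta)$, which uses $v \geq z$ and $\dot z = b \geq 0$, produces
\begin{equation*}
    \frac{\dot v(t)}{w(v(t), \delta)} \leq \frac{\dot z(t)}{w(z(t), \delta)} + a(t).
\end{equation*}
Integrating this inequality from $t_0$ to $t$ and applying the substitutions $s = v(\tau)$ and $s = z(\tau)$ converts each side into a difference of $G$ values; since $v(t_0) = z(t_0) = \Vert x(t_0) \Vert$, the baseline $G$-terms cancel, leaving $G(v(t)) \leq G(z(t)) + \int_{t_0}^t a(\tau)\, \mathrm{d}\tau$. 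Because the integrand $1/w(\cdot,\delta)$ is positive, $G$ is strictly increasing on its domain; inverting $G$ on the range stipulated in the statement, together with $\Vert x(t) \Vert \leq v(t)$ and the definition of $z(t)$, reproduces \eqref{eq:Bihari inequality}. The strict monotonicity of the right-hand side in $t$ follows from $a, b > 0$.

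The main obstacle is the non-constant forcing $b(t)$, which breaks the template of the classical Bihari lemma, whose right-hand side has the form $\alpha + \int \beta(s)\, w(u(s))\, \mathrm{d}s$ with \emph{constant} $\alpha$. The key device is to fold $\Vert x(t_0) \Vert + \int_{t_0}^t b(\tau)\, \mathrm{d}\tau$ into a single time-varying $z(t)$ and then majorize $\dot z/w(v,\delta)$ by $\dot z/w(z,\delta)$, which is precisely what allows the standard change-of-variables step of the classical proof to go through unmodified on the remaining $v$-integral while still delivering a $G$-expression on the $z$-side.
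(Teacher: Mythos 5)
The paper does not prove this theorem at all: it is imported verbatim by citation from the authors' earlier work (\cite[Theorem~3.1]{El-Kebir2021a}), so there is no in-paper proof to compare against. Judged on its own, your argument is correct and is the standard Bihari/Pachpatte comparison proof for a non-constant additive term: the integral form of the ODE plus Assumption~\ref{assumption:nonlinear growth condition} and the uniform monotonicity of $w$ in $\Vert u \Vert$ give the scalar inequality; the majorants $z(t) = \Vert x(t_0)\Vert + \int_{t_0}^t b$ and $v(t) = z(t) + \int_{t_0}^t a(\tau) w(\Vert x(\tau)\Vert,\delta)\,\mathrm{d}\tau$ are $C^1$ and nondecreasing; the two-sided division trick $\dot z / w(v,\delta) \leq \dot z / w(z,\delta)$ (valid because $v \geq z$, $w$ is nondecreasing in its first argument, and $\dot z = b \geq 0$) is exactly the device that lets the chain rule turn both sides into $G$-differences, and the cancellation $G(v(t_0)) = G(z(t_0))$ delivers $G(v(t)) \leq G(z(t)) + \int_{t_0}^t a$. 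Inverting the strictly increasing $G$ on the stipulated domain and using $\Vert x(t)\Vert \leq v(t)$ recovers \eqref{eq:Bihari inequality}, and strict positivity of $a, b$ gives the claimed strict monotonicity in $t$.

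Two small points worth flagging, both inherited from the theorem statement rather than defects of your argument: the step $z(t) \geq \Vert x(t_0)\Vert > 0$ silently assumes $x(t_0) \neq 0$ (if $\Vert x(t_0)\Vert = 0$ one must either use $z(t) > 0$ for $t > t_0$ and a limiting argument, or accept that $G$ may diverge at $0$); and the chain-rule substitution needs $v(\tau) > 0$ on the whole interval so that $G$ is defined along the path, which again holds under the same proviso. Neither affects the validity of the proof under the theorem's stated domain restriction on $G^{-1}$.
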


\begin{remark}
    Theorem~\ref{thm:generalized Bihari inequality for controlled dynamical systems} is a generalization of the Gronwall--Bellman inequality. It can be reduced to the Gronwall-Bellman inequality by taking $w(u) = u$ and $G(u) = \log u$ (see \cite[Remark 2.3.2, p.~109]{Pachpatte1998} for a more in-depth discussion).
\end{remark}

\begin{corollary}\label{corollary:generalized boundedness of trajectory deviation}
    For any $x_0 \in \mathscr{X}_0 \subseteq \mathbb{R}^n$, where $\mathscr{X}_0$ is compact, and any initial time $t_{\mathrm{init}} \in [t_0, \infty)$ and final time $t_f \in [t_{\mathrm{init}}, \infty)$, consider a trajectory $x(t)$ satisfying $x(t_{\mathrm{init}}) = x_0$ and $\dot{x}(t) = f(t, x(t), u(t))$, with $u(t) \in \mathbb{U}$. Consider a trajectory $\bar{x}(t)$ with $\bar{x}(t_{\mathrm{init}}) = x_0$ and $\dot{\bar{x}}(t) = f(t, \bar{x}(t), \bar{u}(t))$, such that $\bar{u}(t) \in \bar{\mathbb{U}}$ satisfies $\sup_{t \in [t_{\mathrm{init}}, t_f]} \Vert u(t) - \bar{u}(t) \Vert \leq \epsilon$. Let $\tilde{f} (t) := f(t, x(t), u(t)) - f(t, \bar{x}(t), \bar{u}(t))$, $\tilde{x}(t) := x(t) - \bar{x}(t)$, and $\tilde{u}(t) := u(t) - \bar{u}(t)$. Let the following bound hold for $t \in [t_{\mathrm{init}}, t_f]$, and for \emph{any} $x(t)$ and $\bar{x}(t)$ satisfying the previous hypotheses: $\Vert \tilde{f}(t) \Vert \leq \tilde{a}(t) \tilde{w}(\Vert \tilde{x}(t) \Vert, \Vert \tilde{u}(t) \Vert) + \tilde{b}(t)$ with $\tilde{a}, \tilde{b}, \tilde{w}$ satisfying the assumptions given in Assumption~\ref{assumption:nonlinear growth condition}. Then, $\tilde{x}(t)$ satisfies
        \begin{equation*}\label{eq:Bihari inequality deviation}
            \Vert \tilde{x}(t) \Vert \leq G^{-1} \left[ G\left( \int_{t_0}^t \tilde{b}(\tau) \mathrm{d}\tau \right) + \int_{t_0}^t \tilde{a}(\tau) \mathrm{d}\tau \right] =: \eta(t, \epsilon)
        \end{equation*}
        for all $t \in [t_{\mathrm{init}}, t_f]$.
\end{corollary}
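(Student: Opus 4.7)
The plan is to apply Theorem~\ref{thm:generalized Bihari inequality for controlled dynamical systems} to the deviation trajectory $\tilde{x}(t)$, treating $\tilde{x}$ as the state and $\tilde{u}$ as the control signal of a fictitious ODE. Since $\dot{\tilde{x}}(t) = \dot{x}(t) - \dot{\bar{x}}(t) = \tilde{f}(t)$, the deviation satisfies an equation whose right-hand side is, by the hypothesis $\Vert \tilde{f}(t) \Vert \leq \tilde{a}(t)\,\tilde{w}(\Vert \tilde{x}(t) \Vert, \Vert \tilde{u}(t) \Vert) + \tilde{b}(t)$, pointwise dominated by exactly the growth form required in Assumption~\ref{assumption:nonlinear growth condition}.

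Next, I would verify the compactness condition on the control set. Since $\sup_{t \in [t_{\mathrm{init}}, t_f]} \Vert \tilde{u}(t) \Vert \leq \epsilon$, the signal $\tilde{u}$ takes values in the closed ball $\mathcal{B}_\epsilon \subseteq \mathbb{R}^m$, which is compact and satisfies $\max_{v \in \mathcal{B}_\epsilon} \Vert v \Vert = \epsilon$. This identifies $\epsilon$ with the constant $\delta$ appearing in the definition of $G$. Crucially, the initial condition collapses: $\tilde{x}(t_{\mathrm{init}}) = x_0 - x_0 = 0$, so the $\Vert x(t_0) \Vert$ term inside $G$ in \eqref{eq:Bihari inequality} vanishes. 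Invoking Theorem~\ref{thm:generalized Bihari inequality for controlled dynamical systems} then gives exactly the stated bound on $\Vert \tilde{x}(t) \Vert$ for all $t \in [t_{\mathrm{init}}, t_f]$.

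The only real obstacle is a formal one: the right-hand side $\tilde{f}(t)$ depends on the two trajectories $x(t)$ and $\bar{x}(t)$ individually, not only on the difference $\tilde{x}(t)$, so $\dot{\tilde{x}}(t)$ is not literally of the form $h(t, \tilde{x}(t), \tilde{u}(t))$ for an autonomous $h$ as required by Theorem~\ref{thm:generalized Bihari inequality for controlled dynamical systems}. To bridge this gap I would either (i) define an auxiliary function $\tilde{h}(t, y, v)$ that reproduces $\tilde{f}(t)$ along the given trajectory pair and majorizes it elsewhere in the manner of Assumption~\ref{assumption:nonlinear growth condition}, so that Theorem~\ref{thm:generalized Bihari inequality for controlled dynamical systems} applies verbatim; or (ii) re-inspect the proof of Theorem~\ref{thm:generalized Bihari inequality for controlled dynamical systems} in \cite{El-Kebir2021a} and note that only the pointwise majorization of $\Vert \dot{\tilde{x}}(t) \Vert$ enters the Bihari-type integration argument, so the hypothesized bound suffices. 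Either route yields the advertised estimate $\eta(t, \epsilon)$.
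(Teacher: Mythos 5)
Your proposal is correct and follows the same route the paper takes: the paper's own proof is the single sentence that the claim ``follows directly from Theorem~\ref{thm:generalized Bihari inequality for controlled dynamical systems},'' and you have simply made explicit the steps that sentence compresses (the vanishing initial deviation, the identification of $\epsilon$ with $\delta$, and the observation that only the pointwise majorization of $\Vert\dot{\tilde{x}}(t)\Vert$ is needed for the Bihari argument). Your discussion of the formal obstacle --- that $\tilde{f}$ depends on the trajectory pair rather than on $(t,\tilde{x},\tilde{u})$ alone --- is a genuine subtlety the paper glosses over, and your resolution via route (ii) is the right one.
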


\begin{proof}
    Given the premise, this claim follows directly from Theorem~\ref{thm:generalized Bihari inequality for controlled dynamical systems}.
\placeqed\end{proof}

\begin{corollary}\label{corollary:generalized boundedness of trajectory deviation with changed initial conditions}
    For any $x_0 \in \mathscr{X}_0 \subseteq \mathbb{R}^n$, where $\mathscr{X}_0$ is compact, and any initial time $t_{\mathrm{init}} \in [t_0, \infty)$ and final time $t_f \in [t_{\mathrm{init}}, \infty)$, consider a trajectory $x(t)$ satisfying $x(t_{\mathrm{init}}) = x_0$ and $\dot{x}(t) = f(t, x(t), u(t))$, with $u(t) \in \mathbb{U}$. Consider a trajectory $\bar{x}(t)$ with $\bar{x}(t_{\mathrm{init}}) = \bar{x}_0$, where $\bar{x}_0 \in \bar{\mathscr{X}}_0 \subseteq \mathbb{R}^n$, and $\dot{\bar{x}}(t) = f(t, \bar{x}(t), \bar{u}(t))$, such that $\bar{u}(t) \in \bar{\mathbb{U}}$ satisfies $\sup_{t \in [t_{\mathrm{init}}, t_f]} \Vert u(t) - \bar{u}(t) \Vert \leq \epsilon$. Let $\tilde{f} (t) := f(t, x(t), u(t)) - f(t, \bar{x}(t), \bar{u}(t))$, $\tilde{x}(t) := x(t) - \bar{x}(t)$, and $\tilde{u}(t) := u(t) - \bar{u}(t)$. Let $d_{\text{H}} (\mathscr{X}_0, \bar{\mathscr{X}}_0) \leq \kappa$. Let the following bound hold for $t \in [t_{\mathrm{init}}, t_f]$, and for \emph{any} $x(t)$ and $\bar{x}(t)$ satisfying the previous hypotheses: $\Vert \tilde{f}(t) \Vert \leq \tilde{a}(t) \tilde{w}(\Vert \tilde{x}(t) \Vert, \Vert \tilde{u}(t) \Vert) + \tilde{b}(t)$ with $\tilde{a}, \tilde{b}, \tilde{w}$ satisfying the assumptions given in Assumption~\ref{assumption:nonlinear growth condition}. Then, $\tilde{x}(t)$ satisfies
        \begin{equation*}\label{eq:Bihari inequality deviation}
        \begin{split}
            \Vert \tilde{x}(t) \Vert &\leq G^{-1} \left[ G\left( \kappa + \int_{t_0}^t \tilde{b}(\tau) \mathrm{d}\tau \right) + \int_{t_0}^t \tilde{a}(\tau) \mathrm{d}\tau \right] \\
            &=: \eta(t, \epsilon, \kappa)
        \end{split}
        \end{equation*}
        for all $t \in [t_{\mathrm{init}}, t_f]$.
\end{corollary}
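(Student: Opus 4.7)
The plan is to reduce the claim to a direct application of Theorem~\ref{thm:generalized Bihari inequality for controlled dynamical systems} to the deviation $\tilde x(t) := x(t)-\bar x(t)$, with the sole novelty being a nonzero initial condition controlled by~$\kappa$. The deviation satisfies $\dot{\tilde x}(t) = \tilde f(t)$ with $\tilde x(t_{\mathrm{init}}) = x_0 - \bar x_0$, and by hypothesis
\[
    \Vert \tilde f(t)\Vert \leq \tilde a(t)\,\tilde w\bigl(\Vert\tilde x(t)\Vert,\Vert\tilde u(t)\Vert\bigr) + \tilde b(t),
\]
with $\tilde a,\tilde b,\tilde w$ meeting Assumption~\ref{assumption:nonlinear growth condition}. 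Since $\sup_t\Vert\tilde u(t)\Vert \leq \epsilon$, the input signal $\tilde u$ lies in the compact ball $\overline{\mathcal{B}_\epsilon}$, placing us in the setting of Theorem~\ref{thm:generalized Bihari inequality for controlled dynamical systems} with $\delta = \epsilon$.

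First, I would convert the Hausdorff bound on the \emph{sets} into a pointwise bound on the two initial points actually compared. Using the characterization~\eqref{eq:Hausdorff characterization}, for the pair $(x_0,\bar x_0)$ to which the claim is applied --- namely the natural pairing of each $x_0\in\mathscr X_0$ with a nearest $\bar x_0\in\bar{\mathscr X}_0$ (and vice versa) --- one obtains $\Vert\tilde x(t_{\mathrm{init}})\Vert = \Vert x_0-\bar x_0\Vert\leq\kappa$. I would then invoke Theorem~\ref{thm:generalized Bihari inequality for controlled dynamical systems} directly on $\tilde x$ to obtain
\[
    \Vert\tilde x(t)\Vert \leq G^{-1}\!\left[G\!\left(\Vert\tilde x(t_{\mathrm{init}})\Vert + \int_{t_0}^t \tilde b(\tau)\,\mathrm{d}\tau\right) + \int_{t_0}^t \tilde a(\tau)\,\mathrm{d}\tau\right].
\]

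To finish, I would observe that $G(r)=\int_{r_0}^r \mathrm{d}s/\tilde w(s,\delta)$ has a strictly positive integrand, so $G$, and hence $G^{-1}$, is strictly increasing on its domain. Replacing $\Vert\tilde x(t_{\mathrm{init}})\Vert$ by the larger value $\kappa$ therefore only enlarges the right-hand side, producing the claimed bound $\eta(t,\epsilon,\kappa)$. The principal obstacle is conceptual rather than computational: one must pin down the reading of the Hausdorff-distance hypothesis so that $\Vert x_0-\bar x_0\Vert\leq\kappa$ for the specific trajectories being compared, after which the inequality follows at once from Theorem~\ref{thm:generalized Bihari inequality for controlled dynamical systems} and the monotonicity of $G^{-1}$.
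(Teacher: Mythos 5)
Your proposal is correct and follows exactly the route the paper intends: the paper's own proof is the one-line remark that the claim "follows directly from Theorem~\ref{thm:generalized Bihari inequality for controlled dynamical systems}," and you have simply made that reduction explicit (nonzero initial deviation bounded by $\kappa$ via the Hausdorff hypothesis, then monotonicity of $G^{-1}$ to pass from $\Vert x_0-\bar x_0\Vert$ to $\kappa$). Your care in noting that the Hausdorff bound only controls the distance to a \emph{nearest} $\bar x_0$ is a fair and slightly more precise reading than the paper supplies, but it does not change the argument.
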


\begin{proof}
    Similarly to Corollary~\ref{corollary:generalized boundedness of trajectory deviation}, given the premise, this claim follows directly from Theorem~\ref{thm:generalized Bihari inequality for controlled dynamical systems}.
\placeqed\end{proof}

\subsubsection{Generalization to off-nominal dynamics}

We now consider the following nonlinear time-varying off-nominal dynamics:
\begin{equation}\label{eq:off-nominal dynamics}
    \dot{x}(t) = g(t, x(t), u(t)), \quad x(0) = x_0 \in \mathbb{R}^n,
\end{equation}

which gives rise to the following assumption that relates these unknown dynamics to the known nominal system dynamics:
\begin{assumption}\label{assumption:off-nominal dynamics norm bound}
    For all $x \in \mathbb{R}^n$, $u \in \mathscr{U}$, $t_0 \leq t < \infty$, we have
    \begin{equation*}
        \Vert g(t, x, u) - f(t, x, u) \Vert \leq \gamma(t),
    \end{equation*}
    where $\gamma$ is a positive, continuous function on $[t_0, \infty)$.
\end{assumption}

This assumption gives rise to the following lemma
\begin{lemma}\label{lm:impaired dynamics norm bound}
    Let Assumption~\ref{assumption:off-nominal dynamics norm bound} hold true. Consider functions $\bar{a}, \bar{b}, \bar{w}$, in the sense of Assumption~\ref{assumption:nonlinear growth condition}, which apply to the following dynamics
     \begin{equation*}
        \dot{\tilde{x}}(t) = f(t, x(t) + \tilde{x}(t), u(t) + \tilde{u}(t)) - f(t, x(t), u(t)), \quad \tilde{x}(t_0) = 0.
    \end{equation*}
    We consider nominal control signal $u \in \mathbb{U}_\epsilon$, and an off-nominal control signal of the form $(u + \bar{u}) \in \mathbb{U}$ are such that $\sup_{\bar{u}(t), t \in [t_0, \infty)} \Vert \bar{u}(t) \Vert \leq \epsilon$.
    
    In addition, consider the following off-nominal trajectory deviation dynamics:
    \begin{equation*}
        \dot{\hat{x}}(t) = g(t, x(t) + \hat{x}(t), u(t) + \hat{u}(t)) - f(t, x(t), u(t)), \quad \hat{x}(t_0) = 0.
    \end{equation*}
    Control signals $u$ and $\hat{u}$ are defined similarly to those of the nominal dynamics. We have a nominal control signal $u \in \mathbb{U}_\epsilon$, and an off-nominal control signal of the form $(u + \hat{u}) \in \mathbb{U}$ such that $\sup_{\hat{u}(t), t \in [t_0, \infty)} \Vert \hat{u}(t) \Vert \leq \epsilon$.
    
    We define $\hat{b}(t) := \tilde{b}(t) + \gamma(t)$. We have that $\Vert \tilde{x}(t) \Vert \leq \eta(t, \epsilon)$, where $\eta(t, \epsilon)$ is obtained as in Corollary~\ref{corollary:generalized boundedness of trajectory deviation} with $\tilde{a} = \bar{a}$, $\tilde{b} = \hat{b}$, $\tilde{w} = \bar{w}$.
\end{lemma}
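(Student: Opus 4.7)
The plan is to reduce the off-nominal deviation dynamics to an instance that falls under the hypotheses of Corollary~\ref{corollary:generalized boundedness of trajectory deviation}, thereby applying Theorem~\ref{thm:generalized Bihari inequality for controlled dynamical systems} directly. The key move is to rewrite the right-hand side of the off-nominal deviation dynamics so that the difference between $g$ and $f$ is isolated from the nominal trajectory-deviation term. Concretely, for the $\hat{x}$-dynamics (which I take to be the quantity the lemma actually bounds, since the $\tilde{x}$ bound is already Corollary~\ref{corollary:generalized boundedness of trajectory deviation}), I would add and subtract $f(t, x(t) + \hat{x}(t), u(t) + \hat{u}(t))$ to get
\begin{equation*}
\dot{\hat{x}}(t) = \bigl[g(t, x+\hat{x}, u+\hat{u}) - f(t, x+\hat{x}, u+\hat{u})\bigr] + \bigl[f(t, x+\hat{x}, u+\hat{u}) - f(t, x, u)\bigr].
\end{equation*}

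The first bracket is bounded in norm by $\gamma(t)$ via Assumption~\ref{assumption:off-nominal dynamics norm bound}, which applies pointwise in $(t,x,u)$ and therefore in particular at $(t, x(t)+\hat{x}(t), u(t)+\hat{u}(t))$. The second bracket is precisely the form to which the hypothesized functions $\bar{a}, \bar{b}, \bar{w}$ apply, since it is the right-hand side of the nominal deviation dynamics evaluated with perturbation $\hat{u}$ in place of $\tilde{u}$; its norm is therefore bounded by $\bar{a}(t)\,\bar{w}(\Vert\hat{x}(t)\Vert,\Vert\hat{u}(t)\Vert) + \bar{b}(t)$. Combining via the triangle inequality gives
\begin{equation*}
\Vert \dot{\hat{x}}(t) \Vert \leq \bar{a}(t)\,\bar{w}(\Vert\hat{x}(t)\Vert,\Vert\hat{u}(t)\Vert) + \bar{b}(t) + \gamma(t) = \bar{a}(t)\,\bar{w}(\Vert\hat{x}(t)\Vert,\Vert\hat{u}(t)\Vert) + \hat{b}(t).
\end{equation*}

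Next, I would verify that this bound satisfies Assumption~\ref{assumption:nonlinear growth condition} with data $(\tilde{a},\tilde{b},\tilde{w}) = (\bar{a},\hat{b},\bar{w})$: continuity and positivity of $\hat{b}$ are immediate from those of $\bar{b}$ and $\gamma$, while the monotonicity and continuity of $\bar{w}$ are inherited from the hypothesized $\bar{w}$. The control-perturbation bound $\sup_t \Vert \hat{u}(t)\Vert \leq \epsilon$ is given, and $\hat{x}(t_0)=0$, so the assumptions of Corollary~\ref{corollary:generalized boundedness of trajectory deviation} are met verbatim for this reconstructed dynamics with $\tilde{a}=\bar{a}$, $\tilde{b}=\hat{b}$, $\tilde{w}=\bar{w}$.

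Applying Corollary~\ref{corollary:generalized boundedness of trajectory deviation} then yields the claimed inequality, with $\eta(t,\epsilon)$ constructed from the same $G$ as in Theorem~\ref{thm:generalized Bihari inequality for controlled dynamical systems} but with $\hat{b}$ in place of $\tilde{b}$ inside the inner integral. The proof is essentially a triangle-inequality splitting followed by a direct appeal to Corollary~\ref{corollary:generalized boundedness of trajectory deviation}, so there is no real analytical obstacle; the main care required is bookkeeping — keeping track of which $f$ and $g$ evaluations are at $(x,u)$ versus $(x+\hat{x},u+\hat{u})$ so that the Assumption~\ref{assumption:off-nominal dynamics norm bound} bound can be invoked legitimately, and confirming that the hypothesized $(\bar{a},\bar{b},\bar{w})$ apply both when the control perturbation is $\tilde{u}$ and when it is $\hat{u}$ (which is immediate since both are bounded by $\epsilon$ and the assumption is stated uniformly in the admissible perturbation).
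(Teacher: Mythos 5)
Your proposal is correct and follows essentially the same route as the paper's own (one-line) proof: an add-and-subtract of $f$ at the perturbed point, the triangle inequality with Assumption~\ref{assumption:off-nominal dynamics norm bound} absorbing the $g-f$ term into $\hat{b} = \tilde{b} + \gamma$, and a direct appeal to Corollary~\ref{corollary:generalized boundedness of trajectory deviation}. Your reading that the concluding bound is really about $\hat{x}$ rather than $\tilde{x}$ is also the right interpretation of the lemma's (slightly loose) notation.
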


\begin{proof}
    This result follows straightforwardly by application of the the triangle inequality on the trajectory deviation growth dynamics and Corollary~\ref{corollary:generalized boundedness of trajectory deviation}.
\placeqed\end{proof}

\section{Main results}
\label{sec:main}

We now present the main results of this paper. We give a method for inner and outer approximation of the forward reachable sets for off-nominal systems that are subject to both diminishment of control authority and changed dynamics.

Unlike in \cite{El-Kebir2021a}, a new hyperrectangular version of the Bihari inequality is required to obtain inner and outer approximations to more generalized reachable sets. As will be specified later, the only requirements imposed on these reachable sets will be that they are nonempty, compact, and connected. To construct a hyperrectangular Bihari inequality, we present a modified nonlinear bound on the deviation dynamics:

\begin{definition}[$(a_i, b_i, w_i)$-bounded growth]\label{def:hyperrectangular nonlinear growth bound}
    We say that a function $h : [t_0, \infty) \times \mathbb{R}^n \times \mathscr{U} \to \mathbb{R}^n$ has \emph{$(a_i, b_i, w_i)$-bounded growth} if for all $x \in \mathbb{R}^n$, $u \in \mathscr{U}$, $t_0 \leq t < \infty$, the following inequality holds:
    \begin{equation*}
        | h_i (t, x, u) | \leq a_i (t) w_i (| x_i |, \Vert u \Vert) + b_i (t),
    \end{equation*}
    for each $i \in \{1, \ldots, n\}$,
    where $a_i, b_i$ are continuous and positive and $w_i$ is continuous, monotonic, nondecreasing and positive in both of its arguments.
\end{definition}


Given this definition, we can now formulate a generalization to Corollary~\ref{corollary:generalized boundedness of trajectory deviation}:

\begin{lemma}\label{lemma:hyperrectangular generalized boundedness of trajectory deviation}
    For any $x_0 \in \mathscr{X}_0 \subseteq \mathbb{R}^n$, where $\mathscr{X}_0$ is compact, and any initial time $t_{\mathrm{init}} \in [t_0, \infty)$ and final time $t_f \in [t_{\mathrm{init}}, \infty)$, consider a trajectory $x(t)$ satisfying $x(t_{\mathrm{init}}) = x_0$ and $\dot{x}(t) = f(t, x(t), u(t))$, with $u(t) \in \mathbb{U}$. Consider a trajectory $\bar{x}(t)$ with $\bar{x}(t_{\mathrm{init}}) = x_0$ and $\dot{\bar{x}}(t) = f(t, \bar{x}(t), \bar{u}(t))$, such that $\bar{u}(t) \in \bar{\mathbb{U}}$ satisfies $\sup_{t \in [t_{\mathrm{init}}, t_f]} \Vert u(t) - \bar{u}(t) \Vert \leq \epsilon$. Let $\tilde{f} (t) := f(t, x(t), u(t)) - f(t, \bar{x}(t), \bar{u}(t))$, $\tilde{x}(t) := x(t) - \bar{x}(t)$, and $\tilde{u}(t) := u(t) - \bar{u}(t)$. Let $\tilde{f}$ be of $(\tilde{a}_i, \tilde{b}_i, \tilde{w}_i)$-bounded growth for $t \in [t_{\mathrm{init}}, t_f]$. Then, $\tilde{x}(t)$ satisfies
        \begin{equation}\label{eq:Bihari inequality deviation componentwise}
            | \tilde{x}_i (t) | \leq G_i^{-1} \left[ G_i\left( \int_{t_0}^t \tilde{b}_i (\tau) \mathrm{d}\tau \right) + \int_{t_0}^t \tilde{a}_i (\tau) \mathrm{d}\tau \right] =: \eta_i (t, \epsilon)
        \end{equation}
        for each $i \in \{1, \ldots, n\}$ and for all $t \in [t_{\mathrm{init}}, t_f]$, where the $G_i$ are defined as $G$ in Theorem~\ref{thm:generalized Bihari inequality for controlled dynamical systems}.
\end{lemma}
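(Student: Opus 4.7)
The plan is to reduce the claim to $n$ scalar applications of Theorem~\ref{thm:generalized Bihari inequality for controlled dynamical systems}, one per coordinate index $i$. The whole point of introducing the componentwise $(a_i, b_i, w_i)$-bounded growth notion of Definition~\ref{def:hyperrectangular nonlinear growth bound} is precisely to allow this kind of coordinatewise invocation of Bihari, so the structural work in the lemma is really just lining up the hypotheses.

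First, I would fix $i \in \{1, \ldots, n\}$ and observe that because $x(t_{\mathrm{init}}) = \bar{x}(t_{\mathrm{init}}) = x_0$, the deviation starts from zero: $\tilde{x}_i(t_{\mathrm{init}}) = 0$. Since $\dot{\tilde{x}}_i(t) = \tilde{f}_i(t)$, the $(\tilde{a}_i, \tilde{b}_i, \tilde{w}_i)$-bounded growth of $\tilde{f}$ yields
\begin{equation*}
    |\dot{\tilde{x}}_i(t)| \leq \tilde{a}_i(t)\, \tilde{w}_i(|\tilde{x}_i(t)|, \Vert \tilde{u}(t) \Vert) + \tilde{b}_i(t)
\end{equation*}
for $t \in [t_{\mathrm{init}}, t_f]$. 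This is exactly the form of Assumption~\ref{assumption:nonlinear growth condition} with state dimension one (treating $\tilde{x}_i \in \mathbb{R}$ as the state) and with control space identified with the image of the admissible $\tilde{u}$, whose norm is bounded by $\epsilon$.

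Second, I would apply Theorem~\ref{thm:generalized Bihari inequality for controlled dynamical systems} to the scalar ODE satisfied by $\tilde{x}_i$, with $\delta = \epsilon$ used as the control-norm bound in the definition of $G_i$. The resulting inequality reads
\begin{equation*}
    |\tilde{x}_i(t)| \leq G_i^{-1}\!\left[ G_i\!\left( |\tilde{x}_i(t_{\mathrm{init}})| + \int_{t_0}^t \tilde{b}_i(\tau)\,\mathrm{d}\tau \right) + \int_{t_0}^t \tilde{a}_i(\tau)\,\mathrm{d}\tau \right].
\end{equation*}
Substituting $|\tilde{x}_i(t_{\mathrm{init}})| = 0$ then produces \eqref{eq:Bihari inequality deviation componentwise}. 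Repeating the argument for each $i$ finishes the proof.

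The only step that needs some care — and what I would flag as the main potential obstacle — is the scalar reduction itself, specifically the fact that the second argument of $\tilde{w}_i$ is $\Vert \tilde{u} \Vert$ (a full $m$-dimensional control norm) rather than $|\tilde{u}_i|$. The matching in the proof relies on the uniform monotonic nondecreasing property of $\tilde{w}_i$ in its second argument, which lets us majorize $\tilde{w}_i(|\tilde{x}_i|, \Vert \tilde{u}(t) \Vert)$ by $\tilde{w}_i(|\tilde{x}_i|, \epsilon)$ and hence use $\delta = \epsilon$ inside the definition of $G_i$, exactly as Theorem~\ref{thm:generalized Bihari inequality for controlled dynamical systems} requires. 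Everything else — continuity, positivity, existence of $G_i^{-1}$ on the relevant domain — is inherited directly from the hypotheses and from the already-proven scalar theorem, mirroring how Corollary~\ref{corollary:generalized boundedness of trajectory deviation} was deduced.
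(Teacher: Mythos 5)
Your proposal is correct and follows essentially the same route as the paper, which simply invokes Corollary~\ref{corollary:generalized boundedness of trajectory deviation} (itself a direct consequence of Theorem~\ref{thm:generalized Bihari inequality for controlled dynamical systems}) once per coordinate; your coordinatewise reduction to the scalar Bihari inequality with zero initial deviation and $\delta = \epsilon$ is exactly that argument spelled out. The subtlety you flag about $\tilde{w}_i$ taking the full control norm $\Vert \tilde{u} \Vert$ in its second argument is handled, as you say, by the uniform monotonicity required in Assumption~\ref{assumption:nonlinear growth condition} and inherited by Definition~\ref{def:hyperrectangular nonlinear growth bound}.
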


\begin{proof}
    The proof follows by repeated application of Corollary~\ref{corollary:generalized boundedness of trajectory deviation} on each dimension.
\placeqed\end{proof}

In what follows, we will equivalently express the bound \eqref{eq:Bihari inequality deviation componentwise} as
    \begin{equation*}\label{eq:Bihari inequality deviation hyperrectangular}
            |\tilde{x} (t)| \preceq \eta(t, \epsilon),
        \end{equation*}
        where $\eta(t, \epsilon) := [\begin{smallmatrix} \eta_1 (t, \epsilon) & \cdots & \eta_n (t, \epsilon) \end{smallmatrix}]\transp$.

Whereas Corollary~\ref{corollary:generalized boundedness of trajectory deviation} gives a bound on the trajectory deviation as a ball, Lemma~\ref{lemma:hyperrectangular generalized boundedness of trajectory deviation} provides a hyperrectangular trajectory deviation bound. This distinction is key in the theorem that will follow next.

We first introduce two new definitions relating to the hyperrectangular trajectory deviation growth bound.

\begin{definition}[Hyperrectangular fattening]\label{def:hyperrectangular fattening}
    Given a compact set $X \subseteq \mathbb{R}^n$, a hyperrectangular fattening by $\rho \in \mathbb{R}^n$ with $\rho \succeq 0$ is defined as:
    \begin{equation*}
        X_{\boxplus \rho} := \bigcup_{x \in X} \left[ \{x\} \oplus \left( \cart_{i =  1}^n [-\rho_i, \rho_i] \right) \right].
    \end{equation*}
\end{definition}

\begin{figure}[h]
\centerline{\includegraphics[width=\linewidth]{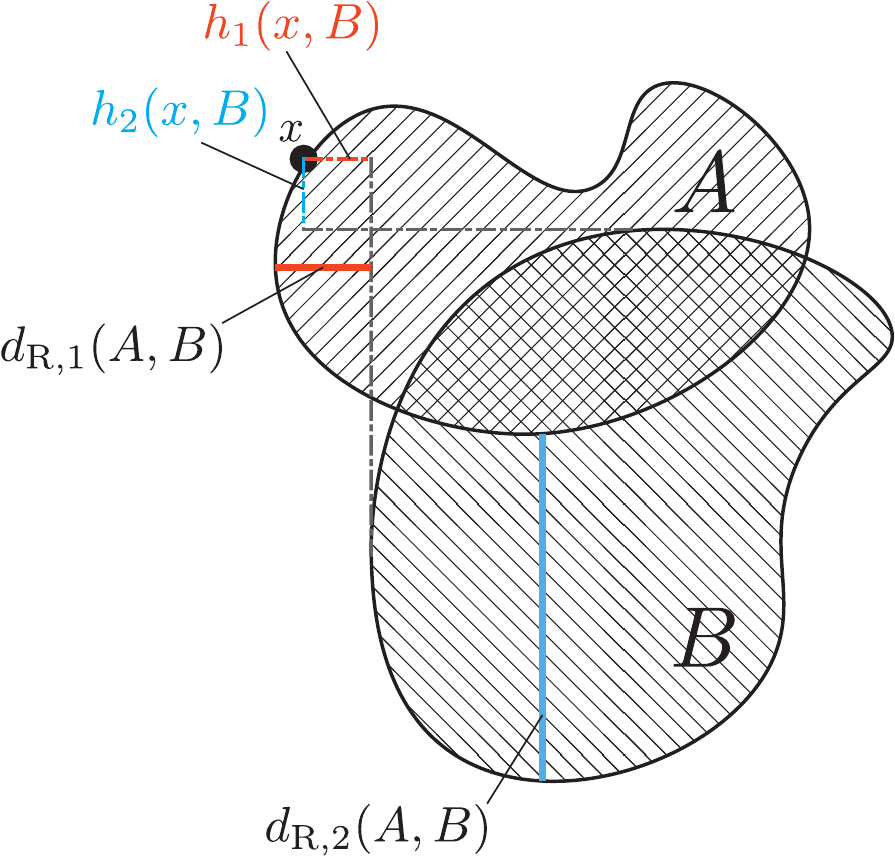}}
\caption{Illustration of the hyperrectangular distance between two compact sets, as determined by distances to intersecting hyperplanes. Distances $h_i (x, B)$ are shown for each axis, as well as $d_{\mathrm{R}, i} (A, B)$ to show that $d_{\mathrm{R}, i} (A, B) \geq h_i (x, B)$.}
\label{fig:hyperrectangular distance illustration}
\end{figure}

\begin{definition}[Hyperrectangular distance]\label{def:hyperrectangular distance}
    Given two compact sets $A, B \subseteq \mathbb{R}^n$, we denote by $d_{\mathrm{R}} (A, B)$ their \emph{hyperrectangular distance}, defined as follows.
    
    On each Cartesian axis $i = 1, \ldots, n$, consider each point $x \in A$ and $y \in B$. We denote by $h_i (x, B)$ the following:
    \begin{equation*}
    \begin{split}
        h_i (x, B) = \min \{ |x_i - y_i| : y \in B\}.
    \end{split}
    \end{equation*}
    If $B = \emptyset$, we say $h_i (x, B) = \infty$.
    
    We denote the hyperrectangular distance as $d_{\mathrm{R}, i} (A, B) = \max \{ \max_{x \in A} h_i (x, B), \max_{y \in B} h_i (y, A) \}$. Combining each component, we define
    \begin{equation*}
        d_{\mathrm{R}} (A, B) := \begin{bmatrix} d_{\mathrm{R}, 1} (A, B) & \cdots & d_{\mathrm{R}, n} (A, B) \end{bmatrix}\transp.
    \end{equation*}
\end{definition}

\begin{remark}
    The value of $h_i (x, B)$ shown above corresponds to the shortest distance from $x$ where there exists a hyperplane on a line from $x$ in the direction of $e_i$, with a normal direction $e_i$, that intersects $B$ (see Fig.~\ref{fig:hyperrectangular distance illustration} for an illustration). If such an intersecting hyperplane does not exist, we say $h_i(x, B) = \infty$. We then have  $d_{\mathrm{R}, i} (A, B) = \max \{ \max_{x \in A} h_i (x, B), \max_{y \in B} h_i (y, A) \}$, as shown in Fig.~\ref{fig:hyperrectangular distance illustration}.

    From Definition~\ref{def:hyperrectangular distance} it trivially follows that for two compact sets $A, B \subseteq \mathbb{R}^n$, we have $A_{\boxplus \rho} \supseteq B$ and $B_{\boxplus \rho} \supseteq A$ if and only if $\rho \succeq d_{\text{R}}(A, B)$. This is analogous to the fattening-based characterization of the Hausdorff distance in \eqref{eq:Hausdorff characterization}.
\end{remark}

Before we can proceed, we must impose a number of mild conditions on the differential inclusion defined by dynamics \eqref{eq:nominal system dynamics} and \eqref{eq:off-nominal system dynamics}, as well as the initial set of states $\mathscr{X}_0$. In particular, we wish to show that the reachable set $\mathbb{X}_t^\rightarrow (F, \mathscr{X}_0)$ is \textit{connected}. This property is key in proving the main result of this work.

We first present a prerequisite lemma on the connectedness of the solution set of a differential inclusion. To this end, we require the definition of the following metric space, as well as two propositions that provide sufficient conditions for $F$ to produce solution sets with connected and compact values.

\begin{definition}\label{def:Staicu function space}
    Let $S$ be a function space defined as
    \begin{equation*}
        S := \left\{x \in C([0, \infty), \mathbb{R}^n) : \dot{x} \in L_{\mathrm{loc}}^1 ([0, \infty), \mathbb{R}^n)\right\},
    \end{equation*}
    endowed with distance metric $d_S$ defined as
    \begin{equation}
        d_S (x, y) := \Vert x(0) - y(0) \Vert + \sum_{k = 1}^\infty \frac{1}{2^k} \frac{\int_0^k \Vert \dot{x}(t) - \dot{y}(t) \Vert \ \mathrm{d}t}{1 + \int_0^k \Vert \dot{x}(t) - \dot{y}(t) \Vert \ \mathrm{d}t}.
    \end{equation}
\end{definition}

In Definition~\ref{def:Staicu function space}, $(S, d_S)$ is a complete metric space \cite[Prop.~1, p.~1007]{Staicu2004}.

\begin{lemma}[{\cite[Thm.~1, p.~1010]{Staicu2004}}]\label{lemma:path-connectedness of solution set}
    Consider a differential inclusion
    \begin{equation*}
    \begin{split}
        \dot{x}(t) &\in F(t, x(t)), \quad \text{a.e.} \ t \in [0,T], \\
        x(0) &= x_0 \in \mathbb{R}^n,
    \end{split}
    \end{equation*}
    where $F : [0, \infty) \times \mathbb{R}^n \rightrightarrows \mathbb{R}^n$ is a compact-valued multifunction.
    Let $F(t, x)$ be path-connected for all $(t, x) \in \mathbb{R}^n \times \mathbb{R}^n$, and let $F(t, x)$ be continuous and Lipschitz in $t$ and $x$.
    Then for any $x_0 \in \mathbb{R}^n$, the set of solutions
    \begin{equation*}
    \begin{split}
        S_F(x_0) := \{x \in S : &x(0) = x_0,  \\
        &\dot{x}(t) \in F(t, x(t)) \ \text{a.e.} \ t \in [0, \infty)\},
    \end{split}
    \end{equation*}
    is path-connected in the space $(S, d_S)$.
\end{lemma}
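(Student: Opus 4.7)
The plan is to join two arbitrary trajectories $y, z \in S_F(x_0)$ by a path $\lambda \mapsto x^\lambda \in S_F(x_0)$, $\lambda \in [0,1]$, with $x^0 = y$ and $x^1 = z$, continuous in $(S, d_S)$. Because $d_S$ is a weighted sum of truncated integrals over windows $[0, k]$, it suffices to establish continuity on each finite horizon $[0, k]$ together with a summable tail bound; this reduces the problem to a finite-horizon construction that is then glued across $k$ via the $1/2^k$ weighting.

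First, I would build a jointly continuous family of velocity selections $v : [0,1] \times [0, \infty) \times \mathbb{R}^n \to \mathbb{R}^n$ with $v(\lambda, t, w) \in F(t, w)$ for all arguments, and with boundary identities $v(0, t, y(t)) = \dot y(t)$ and $v(1, t, z(t)) = \dot z(t)$. The path-connectedness of $F(t, w)$ makes this plausible fiberwise, but joint continuity in $(\lambda, t, w)$ is not automatic: Michael's selection theorem does not apply directly without convex values. I would therefore invoke a Bressan--Colombo style continuous-selection theorem for continuous multifunctions with path-connected values, using the Lipschitz hypothesis of the lemma to propagate the required regularity in $w$.

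Second, I would define $x^\lambda$ as the unique solution to $\dot x^\lambda(t) = v(\lambda, t, x^\lambda(t))$ with $x^\lambda(0) = x_0$. Lipschitz continuity of $F$ in $x$ transfers to $v$ by construction, so Picard--Lindel\"of yields existence and uniqueness on each finite horizon $[0, k]$. A Gronwall estimate then gives continuity of $\lambda \mapsto x^\lambda|_{[0, k]}$ in the seminorm $\int_0^k \Vert \dot x^\lambda(t) - \dot x^{\lambda'}(t) \Vert \, \mathrm{d}t$, which controls the $k$-th summand of $d_S$. Uniform-in-$\lambda$ bounds on $\Vert \dot x^\lambda(t) \Vert$, obtained from compactness of the values $F(t, x^\lambda(t))$ along a tube around the family, keep each summand bounded by a constant, so the $1/2^k$ weighting renders the tail arbitrarily small and upgrades the windowed continuity to $d_S$-continuity.

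The principal obstacle is the first step: producing a parameterized family of $F$-selections that is simultaneously jointly continuous in $(\lambda, t, w)$ and satisfies the prescribed boundary behavior at $\lambda = 0, 1$. Without convexity of the images of $F$, this requires the continuous-selection machinery for path-connected-valued maps developed by Bressan and Colombo; every other component of the argument (the finite-horizon ODE theory and the telescoping to $d_S$) is routine once the selection is in hand, which is why this is the technical heart of the proof.
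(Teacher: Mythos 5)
The paper does not actually prove this lemma --- it is imported verbatim from \cite[Thm.~1, p.~1010]{Staicu2004}, so there is no in-paper argument to compare yours against. Judged on its own terms, your sketch has the right overall architecture (build a continuous path $\lambda \mapsto x^\lambda$ in $S_F(x_0)$, control $d_S$ window by window and use the $2^{-k}$ weights for the tail), and steps two and three would indeed be routine \emph{if} step one were available. But step one, which you correctly identify as the technical heart, does not go through as described, for three concrete reasons. First, there is no continuous-selection theorem for multifunctions into $\mathbb{R}^n$ whose values are merely path-connected; path-connectedness of the images is far too weak to guarantee selections (unlike convexity). The Bressan--Colombo/Fryszkowski machinery you invoke applies to lower semicontinuous multifunctions with \emph{decomposable} values in $L^1$, and that is precisely how Staicu's proof works: one selects continuously from the map $x \mapsto \{v \in L^1 : v(t) \in F(t, x(t)) \text{ a.e.}\}$, whose values are decomposable, combined with Filippov-type successive approximations. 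Indeed, the paper's Appendix~\ref{app:generalizations} records that path-connectedness of the values of $F$ is not even needed for the conclusion --- conditions on measurability, Lipschitz continuity in $x$, and an integrable bound suffice --- which is a strong hint that the mechanism cannot be a fiberwise selection exploiting path-connectedness of $F(t,x)$.

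Second, your boundary requirement $v(0, t, y(t)) = \dot{y}(t)$ cannot be met by a \emph{jointly continuous} $v(\lambda, t, w)$: the endpoints $y, z$ are arbitrary elements of $S_F(x_0)$, so $\dot{y}$ is only in $L^1_{\mathrm{loc}}$ and need not agree with any continuous function along the graph of $y$. This is exactly why the standard proofs parameterize selections in $L^1$ rather than pointwise. Third, even granting a continuous selection $v(\lambda, t, \cdot)$, it need not be Lipschitz in $w$ --- Lipschitz multifunctions with nonconvex values generally do not admit Lipschitz selections --- so the Picard--Lindel\"of/uniqueness step and the Gronwall estimate that follows are not justified for the object you have constructed. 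To repair the proof you would need to replace step one with the decomposable-valued selection argument (or simply defer to \cite{Staicu2004}, as the paper does).
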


\begin{remark}
    Since the conditions of Lemma~\ref{lemma:path-connectedness of solution set} on $F$ will be required throughout this work, we will look at the applicability of these conditions to commonly encountered classes of dynamical systems. We list two here:
    \begin{enumerate}
        \item \emph{Affine-in-control systems}: Consider functions $g : [0, \infty) \times \mathbb{R}^n \to \mathbb{R}^n$ and $h : [0, \infty) \times \mathbb{R}^n \to \mathbb{R}^{n \times m}$ that form the following differential equation:
        \begin{equation*}
            \dot{x}(t) = g (t, x(t)) + h (t, x(t)) u(t).
        \end{equation*}
        From this system, we can introduce a differential inclusion defined by a multifunction
        \begin{equation*}
            F(t, x) := \{g (t, x)\} \oplus h (t, x) \mathscr{U},
        \end{equation*}
        where $\mathscr{U} \subseteq \mathbb{R}^m$ is nonempty, compact, and path-connected. If $g$ and $h$ are continuous and Lipschitz in their arguments, then $F$ will satisfy conditions of Lemma~\ref{lemma:path-connectedness of solution set}.
        \item \emph{General nonlinear systems}: For a dynamical system of the form of \eqref{eq:nominal system dynamics}, a sufficient condition for $F(t, x) := f(t, x, \mathscr{U})$ to satisfy the condition of Lemma~\ref{lemma:path-connectedness of solution set}, is for $\mathscr{U}$ to be nonempty, compact, and path-connected, and $f(t, x)$ to be continuous and Lipschitz in $t$ and $x$.
    \end{enumerate}
    Relaxations to the conditions of Lemma~\ref{lemma:path-connectedness of solution set} are presented in Appendix~\ref{app:generalizations}.
\end{remark}

We proceed to prove that path-connectedness of $S_F (x_0)$ implies that the values of $S_F^t (x_0) := \{x(t) : x \in S_F (x_0)\}$ are connected for all $t \in [0, \infty)$.

\begin{proposition}[Path-connectedness of $S_F^t (x_0)$]\label{prop:connectedness of solution set values}
    For some multifunction $F$ satisfying the hypotheses of Lemma~\ref{lemma:path-connectedness of solution set}, and some $x_0 \in \mathbb{R}^n$, the set $S_F^t (x_0) = \{x(t) : x \in S_F (x_0)\}$ is path-connected for all $t \in [0, \infty)$.
\end{proposition}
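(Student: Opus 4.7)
The plan is to prove that the evaluation-at-time-$t$ map is continuous from $(S, d_S)$ into $\mathbb{R}^n$, and then push a path in the solution set through this map.

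First I would fix $t \in [0,\infty)$ and define the evaluation functional $\mathrm{ev}_t : S \to \mathbb{R}^n$ by $\mathrm{ev}_t(x) := x(t)$. Take any two points $y_1, y_2 \in S_F^t(x_0)$; by the definition of $S_F^t(x_0)$, there exist trajectories $x_1, x_2 \in S_F(x_0)$ with $x_i(t) = y_i$. Since $F$ satisfies the hypotheses of Lemma~\ref{lemma:path-connectedness of solution set}, the solution set $S_F(x_0)$ is path-connected in $(S, d_S)$, so there is a continuous map $\gamma : [0,1] \to S_F(x_0)$ with $\gamma(0) = x_1$ and $\gamma(1) = x_2$. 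If I can establish continuity of $\mathrm{ev}_t$, then $\mathrm{ev}_t \circ \gamma : [0,1] \to \mathbb{R}^n$ is a continuous path from $y_1$ to $y_2$ whose image lies entirely in $S_F^t(x_0)$, which is exactly path-connectedness.

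The main obstacle, and the only nontrivial step, is proving that $\mathrm{ev}_t$ is continuous with respect to the rather unusual metric $d_S$ given in Definition~\ref{def:Staicu function space}. Here I would argue as follows. Fix $x \in S$ and a sequence $x_n \to x$ in $(S, d_S)$. From the definition of $d_S$, convergence implies that $\|x_n(0) - x(0)\| \to 0$ and, for each integer $k \geq 1$, the ratio
\begin{equation*}
\frac{\int_0^k \|\dot{x}_n(s) - \dot{x}(s)\|\,\mathrm{d}s}{1 + \int_0^k \|\dot{x}_n(s) - \dot{x}(s)\|\,\mathrm{d}s} \to 0,
\end{equation*}
because each summand is nonnegative and the series converges to zero. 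Since the map $a \mapsto a/(1+a)$ is a homeomorphism of $[0,\infty)$ onto $[0,1)$, this forces $\int_0^k \|\dot{x}_n - \dot{x}\|\,\mathrm{d}s \to 0$ for every $k$. Choosing any integer $k \geq t$ and applying the fundamental theorem of calculus,
\begin{equation*}
\|x_n(t) - x(t)\| \leq \|x_n(0) - x(0)\| + \int_0^t \|\dot{x}_n(s) - \dot{x}(s)\|\,\mathrm{d}s \leq \|x_n(0) - x(0)\| + \int_0^k \|\dot{x}_n(s) - \dot{x}(s)\|\,\mathrm{d}s,
\end{equation*}
which tends to zero. Hence $\mathrm{ev}_t$ is sequentially continuous, and since $(S, d_S)$ is a metric space, this is equivalent to continuity.

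With the continuity of $\mathrm{ev}_t$ in hand, the argument closes immediately: $\mathrm{ev}_t \circ \gamma$ is continuous as a composition of continuous maps, connects $y_1$ to $y_2$, and takes values in $S_F^t(x_0)$ by construction. Since $y_1, y_2$ were arbitrary, $S_F^t(x_0)$ is path-connected for every $t \in [0, \infty)$. I do not expect any subtlety in handling $t = 0$, since then $S_F^0(x_0) = \{x_0\}$ is trivially path-connected.
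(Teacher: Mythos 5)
Your proof is correct and follows the same basic strategy as the paper's: take the path in $S_F(x_0)$ guaranteed by Lemma~\ref{lemma:path-connectedness of solution set} and push it through the evaluation map $\mathrm{ev}_t$. You are in fact more careful than the paper's own proof of this proposition, which simply asserts that $p(\cdot)(t)$ is a path without verifying that evaluation at time $t$ is continuous with respect to $d_S$ (that verification is deferred to the proof of Proposition~\ref{prop:solution set values continuum}); your sequential argument --- extracting $\int_0^k \Vert \dot{x}_n - \dot{x}\Vert\,\mathrm{d}s \to 0$ from each summand via the homeomorphism $a \mapsto a/(1+a)$ and then bounding $\Vert x_n(t) - x(t)\Vert$ by the fundamental theorem of calculus --- supplies exactly that missing step, and more cleanly than the paper's later $\delta$--$\epsilon$ computation.
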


\begin{proof}
    We consider the case where $\# S_F (x_0) > 1$; the case of a singleton is trivial. To show that the values of $S_F (x_0)$ are path-connected, let us first note that for any $x, y \in S_F (x_0)$, there exists a continuous path $p : [0, 1] \to S_F (x_0)$ such that $p(0) = x$ and $p(1) = y$. For all $t$, for any $a, b \in S_F (x_0) (t)$, there exist at least one pair of functions $(x_a, x_b) \subseteq S_F (x_0)$ such that $x_a (t) = a$, $x_b (t) = b$. Let $p_{a, b} : [0, 1] \to S_F (x_0)$ be a path that connects $x_a$ and $x_b$, which exists by path-connectedness of $S_F (x_0)$. Then, $p(\cdot)(t) : [0, 1] \to \mathbb{R}^n$ is a path between $a$ and $b$. Hence, the values of $S_F (x_0)(t)$ are path-connected for all $t \in [0, \infty)$.
\placeqed\end{proof}

In Proposition~\ref{prop:connectedness of solution set values}, we have shown that the sets $S_F^t (x_0)$ for $t \in [0, \infty)$ are path-connected. In our main theorem, we will also require that these sets are compact and nonempty. The following proposition guarantees this.

\begin{proposition}[$S_F^t (x_0)$ forms a path-connected continuum]\label{prop:solution set values continuum}
    For a differential inclusion
    \begin{equation*}
    \begin{split}
        \dot{x}(t) &\in F(t, x(t)), \quad \text{a.e.} \ t \in [0,T], \\
        x(0) &= x_0 \in \mathbb{R}^n,
    \end{split}
    \end{equation*}
    where $F : [0, \infty) \times \mathbb{R}^n \rightrightarrows \mathbb{R}^n$ is a compact-valued multifunction. Let $F$ satisfy the hypotheses of Lemma~\ref{lemma:path-connectedness of solution set}.

    Then, for any $x_0 \in \mathbb{R}^n$, the solution set $S_F (x_0)$ is a path-connected \emph{continuum} in $S$, i.e., a nonempty, compact, path-connected subset of $S$. In addition, the sets $S_F^t (x_0)$ are path-connected continua in $\mathbb{R}^n$ for all $t \in [0, \infty)$.
\end{proposition}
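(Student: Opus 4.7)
The plan is to handle the proposition in two logical pieces: first establish that $S_F(x_0)$ itself is a path-connected continuum in $(S, d_S)$, and then transfer these properties to each time slice $S_F^t(x_0)$ via the evaluation map.

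For the first piece, nonemptiness of $S_F(x_0)$ follows from a standard existence theorem for differential inclusions, which applies because $F$ is compact-valued and Lipschitz in $x$ (hypotheses of Lemma~\ref{lemma:path-connectedness of solution set}). Path-connectedness is exactly the conclusion of Lemma~\ref{lemma:path-connectedness of solution set}. The substantive part is compactness of $S_F(x_0)$ in $(S, d_S)$. I would use a standard closure/compactness argument: fix any sequence $\{x^{(j)}\} \subseteq S_F(x_0)$. On each interval $[0, k]$, the Lipschitz condition on $F$ together with Gronwall's inequality gives a uniform bound on $\|x^{(j)}(t)\|$, which in turn (since $F$ is compact-valued and continuous) gives a uniform bound on $\|\dot{x}^{(j)}(t)\|$. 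The trajectories are therefore uniformly bounded and equicontinuous on $[0, k]$, so by Arzel\`a--Ascoli a subsequence converges uniformly on $[0,k]$, and weak-$L^1$ compactness of the derivatives gives convergence of $\dot{x}^{(j)} \to \dot{x}^*$ in $L^1([0,k])$ (after passing to a convex-combination argument if needed). A diagonal extraction across $k = 1, 2, \ldots$ produces a subsequence that converges in the metric $d_S$. Finally the limit $x^*$ lies in $S_F(x_0)$ by the standard closure theorem for differential inclusions with upper-semicontinuous, compact convex-valued right-hand sides (here applicable because the hypotheses of Lemma~\ref{lemma:path-connectedness of solution set} ensure closure under this type of convergence).

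For the second piece, consider the evaluation map $e_t : (S, d_S) \to \mathbb{R}^n$ defined by $e_t(x) := x(t)$. I would verify continuity of $e_t$: if $d_S(x^{(j)}, x) \to 0$, then by definition $x^{(j)}(0) \to x(0)$ and $\int_0^k \|\dot{x}^{(j)} - \dot{x}\| \, \mathrm{d}\tau \to 0$ for every $k$; writing $x^{(j)}(t) = x^{(j)}(0) + \int_0^t \dot{x}^{(j)}(\tau) \, \mathrm{d}\tau$ and applying the triangle inequality (for $k \geq t$) yields $x^{(j)}(t) \to x(t)$. Therefore $S_F^t(x_0) = e_t(S_F(x_0))$ is the continuous image of a compact, nonempty set, hence compact and nonempty. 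Path-connectedness of $S_F^t(x_0)$ was already established in Proposition~\ref{prop:connectedness of solution set values}.

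The main obstacle will be the compactness argument for $S_F(x_0)$ in the Staicu metric: the space $(S, d_S)$ is nonstandard (a weighted sum over $[0, k]$ intervals), so some care is needed to ensure that the diagonal subsequence extracted from the Arzel\`a--Ascoli arguments on each $[0, k]$ genuinely converges in $d_S$, and that the closure theorem for differential inclusions applies at the limit. Once compactness of $S_F(x_0)$ and continuity of $e_t$ are in hand, the rest of the proposition drops out immediately.
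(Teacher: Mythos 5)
Your overall decomposition matches the paper's: establish that $S_F(x_0)$ is a nonempty, compact, path-connected subset of $(S, d_S)$, then push these properties forward to $S_F^t(x_0)$ through the evaluation map. Your continuity argument for $e_t$ is essentially the paper's argument, and in fact cleaner: the paper takes a detour through linearity of $E_t$ and an explicit $\delta$--$\epsilon$ computation with the weights $2^{-k}$, whereas you observe directly that $d_S(x^{(j)},x)\to 0$ forces $\int_0^k \Vert \dot{x}^{(j)}-\dot{x}\Vert\,\mathrm{d}t \to 0$ for each $k$, hence $x^{(j)}(t)\to x(t)$. That part is fine, as is invoking Proposition~\ref{prop:connectedness of solution set values} and Lemma~\ref{lemma:path-connectedness of solution set} for path-connectedness.

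The genuine gap is in your compactness argument for $S_F(x_0)$. The metric $d_S$ measures \emph{strong} $L^1_{\mathrm{loc}}$ convergence of the derivatives, so compactness of $S_F(x_0)$ in $(S,d_S)$ requires extracting a subsequence with $\dot{x}^{(j)}\to\dot{x}^*$ strongly in $L^1([0,k])$ for every $k$. Arzel\`a--Ascoli gives uniform convergence of the trajectories, and the uniform bound on $\Vert\dot{x}^{(j)}\Vert$ gives at best weak $L^1$ sequential compactness of the derivatives (Dunford--Pettis); weak convergence does not imply the strong convergence that $d_S$ demands. Your proposed repair via a ``convex-combination argument'' (Mazur's lemma) does produce strongly convergent convex combinations, but convex combinations of solutions of $\dot{x}\in F(t,x)$ are not themselves solutions unless $F$ is convex-valued --- and the hypotheses here only require compact, path-connected values. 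Likewise, the closure theorem you invoke at the limit is stated for upper-semicontinuous \emph{convex}-valued right-hand sides, which is not the setting of the proposition. Passing from weak to strong convergence of derivatives for nonconvex-valued Lipschitz inclusions is exactly the nontrivial content that the paper outsources to \cite[Thm.~5.1, p.~228]{Tolstonogov1986}; as sketched, your argument does not close this step, and you would either need to cite such a result or supply a substantially different argument (e.g., one exploiting the Filippov--Gronwall estimates for Lipschitz inclusions) rather than Arzel\`a--Ascoli plus weak compactness.
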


\begin{proof}
    The fact that $S_F (x_0)$ is a nonempty compact subset of $S$ follows from \cite[Thm.~5.1, p.~228]{Tolstonogov1986}. It is clear that the values of $S_F (x_0)$ will be nonempty, since $\# S_F (x_0) > 0$. Given some $t \in [0, \infty)$, we define a functional $E_t : S \to \mathbb{R}^n$ as $E_t (x) := x(t)$. If $E_t$ can be shown to be continuous, then $E_t$ is a \emph{preserving} map in the sense of \cite[p.~21]{Gerlits2004}, i.e., the image $E_t (S_F(x_0)) = S_F^t (x_0) \subseteq \mathbb{R}^n$ preserves the compactness and path-connectedness properties of $S_F(x_0)$. We proceed to show that $E_t$ is indeed a continuous linear functional.
    
    We require linearity to show that $E_t$ is uniformly continuous on all of $S$. To show that $E_t$ is linear, consider two functionals $x, y \in S$, and a scalar $\alpha \in \mathbb{R}$. We clearly find
    \begin{equation*}
    \begin{split}
        E_t (x + y) &= (x + y)(t) = x(t) + y(t) = E_t (x) + E_t (y), \\
        E_t (\alpha x) &= (\alpha x)(t) = \alpha x(t) = \alpha E_t (x).
    \end{split}
    \end{equation*}
    
    If $E_t$ is linear, by \cite[Thm.~1.1, p.~54]{Taylor1986} it suffices to show that $E_t$ is continuous at any one $y \in S$. Continuity of the functional is shown next by using the $\delta$--$\epsilon$ criterion \cite[p.~52]{Taylor1986}.
    
    We proceed to show that for each $\epsilon > 0$, there exists some $\epsilon' > 0$ such that $\Vert E_t (x) - E_t (y) \Vert < \epsilon$ for all $x \in S$ such that $d_S (x, y) < \epsilon'$. To this end, let us define $a_k := \int_0^k \Vert \dot{x}(t) - \dot{y}(t) \Vert \ \mathrm{d}t$. By definition of the solution set, we have for any $x, y \in S_F (x_0)$:
    \begin{equation*}
    \begin{split}
        d_S (x, y) &= \Vert x(0) - y(0) \Vert + \sum_{k = 1}^\infty \frac{1}{2^k} \frac{a_k}{1 + a_k} \\
        &= \sum_{k = 1}^\infty \frac{1}{2^k} \frac{a_k}{1 + a_k} = \sum_{k = 1}^\infty \frac{1}{2^k} \frac{1}{1 + a_k^{-1}},        
    \end{split}
    \end{equation*}
    since $x(0) = y(0) = x_0$.
    
    We know by path-connectedness of $S_F (x_0)$ that for any $x \in S_F (x_0)$, given some $\epsilon' > 0$, there exists $y \in S_F (x_0) \setminus \{x\}$ such that $d_S (x, y) < \epsilon'$. In general, we can upper-bound the difference between values of $x$ and $y$ at time $t$ as follows:
    \begin{equation*}
    \begin{split}
        \Vert x(t) - y(t) \Vert &= \left\Vert x(0) + \int_0^t \dot{x} (s) \ \mathrm{d}s - y(0) - \int_0^t \dot{y} (s) \ \mathrm{d}s \right\Vert \\
        &= \left\Vert \int_0^t \dot{x} (s) - \dot{y} (s) \ \mathrm{d}s \right\Vert \leq \int_0^t \Vert \dot{x}(s) - \dot{y}(s) \Vert \ \mathrm{d}s,
    \end{split}
    \end{equation*}
    which follows from Jensen's inequality \cite[p.~109]{Folland1999}.
    
    Given some $\epsilon > 0$ and $t \in (0, \infty)$, let $K := \lceil t \rceil \in \mathbb{N}$. Since $\int_0^t \Vert \dot{x}(s) - \dot{y}(s) \Vert \ \mathrm{d}s < \epsilon$ implies $d(x(t), y(t)) < \epsilon$, it suffices to show that there exists some $\epsilon' > 0$, such that any $y \in S_F (x_0) \setminus \{x\}$ that satisfies $d_S (x, y) < \epsilon'$ yields $\int_0^t \Vert \dot{x}(s) - \dot{y}(s) \Vert \ \mathrm{d}s < \epsilon$. We choose $\epsilon'$ as follows:
    \begin{equation*}
    \begin{split}
        d_S (x, y) &= \sum_{k = 1}^\infty \frac{1}{2^k} \frac{1}{1 + a_k^{-1}} \\
        &\geq \left( \sum_{k = 1}^K \frac{1}{2^k} \frac{1}{1 + \epsilon^{-1}} \right) + \left( \sum_{k = K+1}^\infty \frac{1}{2^k} \frac{1}{1 + a_k^{-1}} \right) \\
        &\geq \sum_{k = 1}^K \frac{1}{2^k} \frac{1}{1 + \epsilon^{-1}} =: \epsilon'(t, \epsilon).
    \end{split}
    \end{equation*}
    We may evaluate $\epsilon'(t, \epsilon)$ as
    \begin{equation*}
        \epsilon'(t, \epsilon) = \frac{(1 - 2^{-K}) \epsilon}{1 + \epsilon} > 0.
    \end{equation*}
    If we consider $y \in S_F (x_0) \setminus \{x\}$ such that $d_S (x, y) < \epsilon'(t, \epsilon)$, we have therefore shown that we obtain $d(x(t), y(t)) < \epsilon$; at least one such $y$ exists by path-connectedness of $S_F (x_0)$. We thus proved continuity of $E_t$.
    
    Having shown that $E_t$ is a continuous (linear) operator (and therefore a preserving map), we have proven that $E_t (S_F(x_0))$ is a path-connected continuum for all $t \in [0, \infty)$ and $x_0 \in \mathbb{R}^n$.
\placeqed\end{proof}


Given the result of Proposition~\ref{prop:solution set values continuum}, we can now show that given the above conditions and a condition on the initial set of states, the reachable set $\mathbb{X}_t (F, \mathscr{X}_0)$ is also path-connected.

\begin{lemma}\label{lemma:connectedness of reachable set}
    For a differential inclusion
    \begin{equation*}
    \begin{split}
        \dot{x}(t) &\in F(t, x(t)), \quad \text{a.e.} \ t \in [0,T], \\
        x(0) &= x_0 \in \mathbb{R}^n,
    \end{split}
    \end{equation*}
    where $F : [0, \infty) \times \mathbb{R}^n \rightrightarrows \mathbb{R}^n$ satisfies all conditions listed in Proposition~\ref{prop:solution set values continuum}, given a path-connected continuum $\mathscr{X}_0 \subseteq \mathbb{R}^n$, reachable set $\mathbb{X}_t^\rightarrow (F, \mathscr{X}_0)$ is path-connected for all $t \in [0, \infty)$.
\end{lemma}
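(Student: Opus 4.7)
The plan is to produce, for any two points $a, b \in \mathbb{X}_t^\rightarrow(F, \mathscr{X}_0)$, an explicit continuous path between them that stays within the reachable set. To set up, I pick trajectories $\xi^a \in S_F(x_0^a)$ and $\xi^b \in S_F(x_0^b)$ realizing $a = \xi^a(t)$ and $b = \xi^b(t)$, with $x_0^a := \xi^a(0)$ and $x_0^b := \xi^b(0)$ in $\mathscr{X}_0$. Path-connectedness of $\mathscr{X}_0$ yields a continuous curve $\gamma : [0,1] \to \mathscr{X}_0$ with $\gamma(0) = x_0^a$ and $\gamma(1) = x_0^b$.

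The heart of the argument is to lift $\gamma$ to a continuous map $\Xi : [0,1] \to S$ satisfying $\Xi(s) \in S_F(\gamma(s))$ for every $s \in [0,1]$ and $\Xi(0) = \xi^a$. This step would lean on the Lipschitz and continuity hypotheses on $F$ inherited from Lemma~\ref{lemma:path-connectedness of solution set}, which deliver continuous dependence of solutions of the differential inclusion on their initial data in the $d_S$-topology (a Filippov-type stability result, in the same circle of ideas as the estimates underlying Lemma~\ref{lemma:path-connectedness of solution set}). Once $\Xi$ is in hand, composing with the evaluation functional $E_t$ shown to be continuous in the proof of Proposition~\ref{prop:solution set values continuum} gives a continuous path $s \mapsto \Xi(s)(t)$ in $\mathbb{X}_t^\rightarrow(F, \mathscr{X}_0)$ from $a$ to a point $c := \Xi(1)(t) \in S_F^t(x_0^b)$.

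To close, I concatenate this path with a continuous path from $c$ to $b$ inside $S_F^t(x_0^b)$; such a path exists because Proposition~\ref{prop:solution set values continuum} guarantees $S_F^t(x_0^b)$ is a path-connected continuum, and $S_F^t(x_0^b) \subseteq \mathbb{X}_t^\rightarrow(F, \mathscr{X}_0)$. Gluing the two segments together yields a continuous path from $a$ to $b$ inside the reachable set, establishing path-connectedness.

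The main difficulty is the continuous lifting in the second paragraph: while it is intuitively clear that trajectories of a Lipschitz differential inclusion depend continuously on their initial conditions, producing a genuine continuous selection $\Xi$ (as opposed to merely Hausdorff continuity of $x_0 \mapsto S_F(x_0)$) requires invoking the selection/stability machinery for differential inclusions in order to build compatible nearby trajectories along a sufficiently fine partition of $[0,1]$ and concatenate them in $(S,d_S)$. If making this rigorous turns out to be cumbersome, a fallback would be to prove only connectedness (by observing that each $S_F^t(\gamma(s))$ is connected, together with Hausdorff continuity of the fiber map along $\gamma$, and invoking a standard "continuous family of connected sets" argument) and then upgrade to path-connectedness using the regularity of the fibers already established.
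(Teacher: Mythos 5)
Your proposal follows essentially the same route as the paper's own proof: both connect the two initial conditions by a continuous path in $\mathscr{X}_0$, invoke continuity of the solution-set map $S_F$ with respect to the initial condition (the paper cites \cite[Cor.~4.5]{Zhu1991} for this), use path-connectedness of the fibers $S_F^t(x_0)$ guaranteed by Proposition~\ref{prop:solution set values continuum}, and push the resulting family of trajectories forward through the evaluation functional $E_t$. The continuous-lifting step you single out as the main difficulty is exactly the point the paper treats informally by working with the set-valued path $S_F \circ p_0$ and asserting that the union of its values is path-connected, so your write-up is, if anything, more explicit about where the remaining technical work lies.
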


\begin{proof}
    We draw upon \cite[Cor.~4.5, p.~233]{Zhu1991}, which says that the choice of $F$ in Lemma~\ref{lemma:path-connectedness of solution set} is sufficient for the solution set $S_F : \mathbb{R}^n \rightrightarrows S$ to be continuous on $\mathbb{R}^n$. In other words, the solution set $S_F$ has a continuous dependence on the initial value.
    
    We characterize the reachable set as follows:
    
    \begin{equation*}
        R_F (t) := \mathbb{X}_t^\rightarrow (F, \mathscr{X}_0) = \bigcup_{x_0 \in \mathscr{X}_0} S_F^t (x_0),
    \end{equation*}
    
    It is clear that for any two values $a, b \in R_F (t)$, there exist $x_0, x_0' \in \mathscr{X}_0$ such that $a \in S_F^t (x_0)$ and $b \in S_F^t (x_0')$. Since $\mathscr{X}_0$ is path-connected and $S_F$ is continuous, there exists a continuous path $p_0 : [0, 1] \to \mathscr{X}_0$ connecting $x_0$ and $x_0'$. Therefore, the solution sets for $S_F (x_0)$ and $S_F (x_0')$ are connected by a path $p_s = S_F \ \circ \ p_0 : [0, 1] \rightrightarrows S$. Since $S_F' (x_0, x_0') := \bigcup_{\tau \in [0, 1]} p_s (\tau)$ is path-connected, this implies that its values are also path-connected, analogous to the latter part of the proof of Proposition~\ref{prop:connectedness of solution set values}. Hence, $R_F (t)$ is path-connected for all $t \in [0, \infty)$.
\placeqed\end{proof}

We can now provide a means of inner and outer approximating the off-nominal reachable set based on a hyperrectangular trajectory deviation growth bound.

\begin{figure}[h]
\centerline{\includegraphics[width=0.85\linewidth]{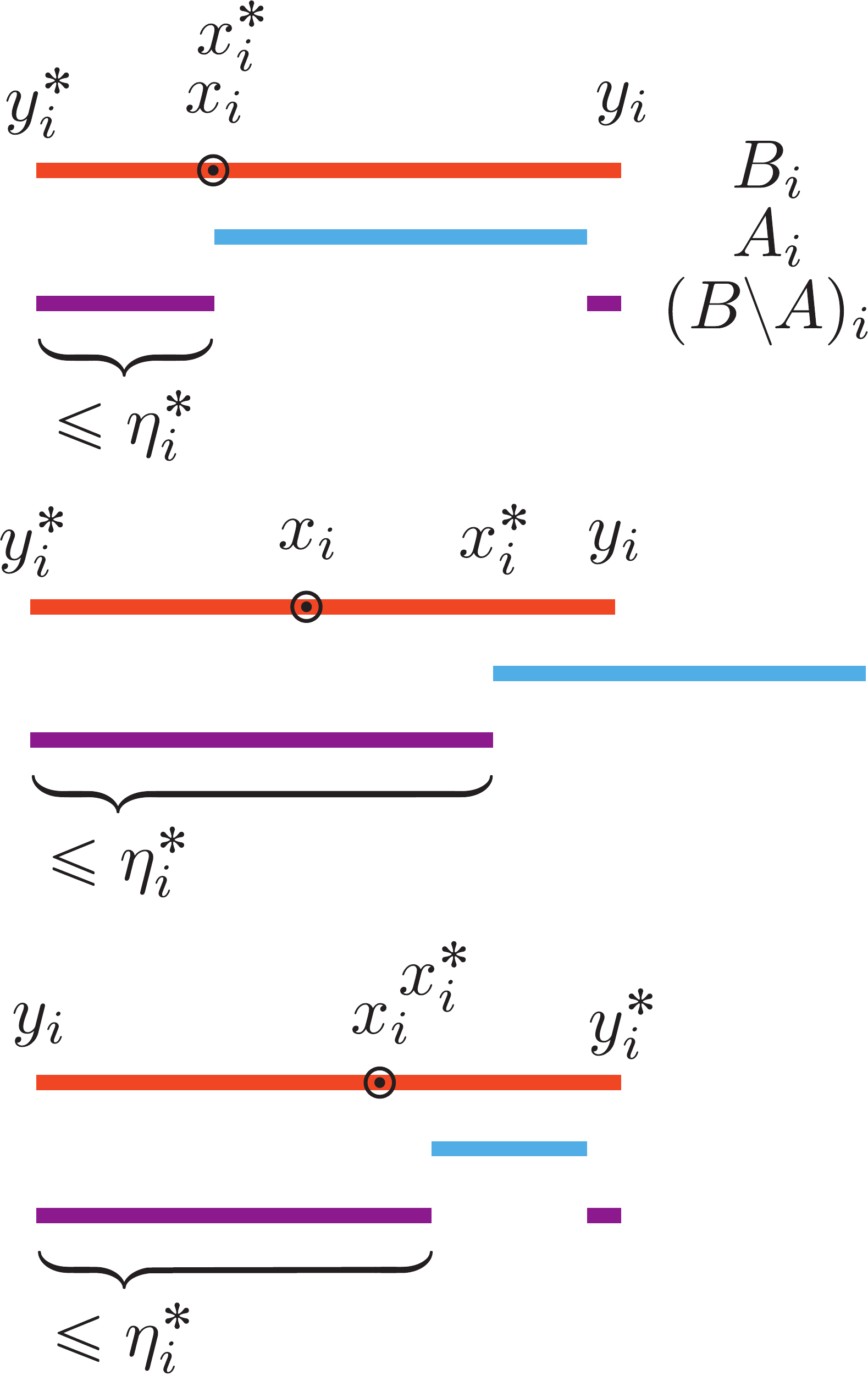}}
\caption{Illustration of some of the arrangements considered in producing the various contradictions in the proof of Theorem~\ref{thm:reachable set guaranteed underapproximation impaired general}(iii).}
\label{fig:hyperrectangular distance contradictions}
\end{figure}

\begin{theorem}[General FRS inner approximation with changed dynamics]\label{thm:reachable set guaranteed underapproximation impaired general}
    Let $f : [0, \infty) \times \mathbb{R}^n \times \mathscr{U} \to \mathbb{R}^n$, $g : [0, \infty) \times \mathbb{R}^n \times \mathscr{V} \to \mathbb{R}^n$, where $\mathscr{U}, \mathscr{V} \subseteq \mathbb{R}^m$ are such that $d_{\mathrm{R}} (\mathscr{U}, \mathscr{V}) \preceq \epsilon$. Let $G (t, x) = g(t, x, \mathscr{V})$ and $F (t, x) = f(t, x, \mathscr{U})$, and let $\mathscr{X}_0 \subseteq \mathbb{R}^n$, the set of initial states, and initial time $t_0 \in \mathbb{R}_+$ be given. Let $\mathscr{X}_0$, $f, g$, and $\mathscr{U}, \mathscr{V}$ satisfy the conditions of Lemma~\ref{lemma:connectedness of reachable set}. Let the hypotheses of Lemma~\ref{lemma:hyperrectangular generalized boundedness of trajectory deviation} be satisfied with $\bar{\mathscr{U}} = \mathscr{V}$, $t_{\mathrm{init}} = t_0$, and $t_f > t_{\mathrm{init}}$. Let $\eta(t, \epsilon)$ be obtained as in Lemma~\ref{lemma:hyperrectangular generalized boundedness of trajectory deviation}. Then:
    \begin{enumerate}[label=(\roman*), wide, labelwidth=!, labelindent=0pt]
        \item For each $x_0 \in \mathscr{X}_0$ there exists a trajectory $x(t)$ emanating from $x (t_0) = x_0$ with $\dot{x}(t) \in F(t, x(t))$ and a trajectory $y (t)$ satisfying $y (t_0) = x_0$ and $\dot{y} (t) \in G (t, y(t))$ such that $|x(t) - y(t)| \preceq \eta (t, \epsilon)$ for all $t \in [t_0, t_f]$;
        \item Let $T \in [0, t_f - t_0]$, and let $\eta^* = \eta(t_0 + T, \epsilon)$. For all $t \in [t_0, t_0 + T]$, $d_{\mathrm{R}} [\mathbb{X}^\rightarrow_t (G, \mathscr{X}_0), \mathbb{X}^\rightarrow_t (F, \mathscr{X}_0)] \preceq \eta^*$;
        \item For all $t \in [t_0, t_0 + T]$, \begin{equation*}
            \mathbb{X}^\rightarrow_t (F, \mathscr{X}_0) \setminus (\partial \mathbb{X}^\rightarrow_t (F, \mathscr{X}_0))_{\boxplus \eta^*} \subseteq \mathbb{X}^\rightarrow_t (G, \mathscr{X}_0).
        \end{equation*}
    \end{enumerate}
\end{theorem}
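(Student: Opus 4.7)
I would treat the three claims in order; the first two are short combinations of earlier results, while (iii) is the technical heart.

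For (i), given $x_0 \in \mathscr{X}_0$ and a nominal input $\phi \in \mathbb{U}$, my plan is to choose $\bar\phi$ as a measurable selection of $\arg\min_{v \in \mathscr{V}}\|v - \phi(t)\|$, so that $|\phi(t)-\bar\phi(t)| \preceq \epsilon$ componentwise (from $d_{\mathrm{R}}(\mathscr{U}, \mathscr{V}) \preceq \epsilon$). The trajectories $x(t)$ driven by $f, \phi$ and $y(t)$ driven by $g, \bar\phi$ both start at $x_0$. I would first fold the $g$-versus-$f$ discrepancy into an inflated drift term using the triangle-inequality technique of Lemma~\ref{lm:impaired dynamics norm bound}, then invoke Lemma~\ref{lemma:hyperrectangular generalized boundedness of trajectory deviation} componentwise to conclude $|x(t) - y(t)| \preceq \eta(t, \epsilon)$ on $[t_0, t_f]$.

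For (ii), Part (i) supplies a companion $q \in \mathbb{X}^\rightarrow_t(G, \mathscr{X}_0)$ within the $\eta^*$-hyperrectangle of each $p \in \mathbb{X}^\rightarrow_t(F, \mathscr{X}_0)$, giving $\mathbb{X}^\rightarrow_t(F, \mathscr{X}_0) \subseteq \mathbb{X}^\rightarrow_t(G, \mathscr{X}_0)_{\boxplus \eta^*}$. The symmetric construction -- start with an off-nominal trajectory and select a nominal control pointwise close to it -- gives the reverse inclusion, and the fattening characterization in the remark after Definition~\ref{def:hyperrectangular distance} completes the bound.

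For (iii), write $F_t := \mathbb{X}^\rightarrow_t(F, \mathscr{X}_0)$ and $G_t := \mathbb{X}^\rightarrow_t(G, \mathscr{X}_0)$, and let $p \in F_t \setminus (\partial F_t)_{\boxplus \eta^*}$. The first step is to show that the closed box $B := \{p\} \oplus \cart_{i=1}^n [-\eta^*_i, \eta^*_i]$ lies in $F_t$: by hypothesis $B \cap \partial F_t = \emptyset$, and $p \in \mathrm{int}(F_t)$ (else $p \in (\partial F_t)_{\boxplus \eta^*}$ trivially), so connectedness of $B$ forces $B \subseteq \mathrm{int}(F_t)$. Suppose for contradiction $p \notin G_t$. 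Part (i) produces a twin $q \in G_t \cap B$, and closedness of $G_t$ yields an exit point $r \in \partial G_t \cap B$ on the segment from $p$ to $q$. I would then apply (i) at each axis-shifted probe point $p \pm \eta^*_i e_i \in B \subseteq F_t$ to obtain $G_t$-witnesses whose $i$-th coordinate lies on opposite sides of $p_i$ while the other coordinates stay within $\eta^*$ of $p$'s. Combining these witnesses with path-connectedness of $G_t$ (Lemma~\ref{lemma:connectedness of reachable set}) and the reverse inclusion $G_t \subseteq (F_t)_{\boxplus \eta^*}$, I would enumerate the possible geometric arrangements of $G_t$ relative to the coordinate hyperplanes through $p$ (as depicted in Fig.~\ref{fig:hyperrectangular distance contradictions}); each arrangement forces either $p \in G_t$ or a boundary point of $F_t$ inside $B$, each a contradiction.

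The main obstacle is the multi-dimensional geometry in (iii): for $n \geq 2$, axis-wise witnesses and path-connectedness of $G_t$ do not by themselves force $p \in G_t$, because $G_t$ could in principle weave around $p$. The argument must therefore lean heavily on the constraint $B \cap \partial F_t = \emptyset$ in concert with $G_t \subseteq (F_t)_{\boxplus \eta^*}$ to rule out such weavings case by case -- this is the ``projection slimming'' theme captured in Fig.~\ref{fig:hyperrectangular distance contradictions}.
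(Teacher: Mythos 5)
Your treatment of (i) and (ii) is correct and essentially the paper's own argument: both parts reduce to Lemma~\ref{lemma:hyperrectangular generalized boundedness of trajectory deviation} (with the $g$-versus-$f$ discrepancy folded into the drift as in Lemma~\ref{lm:impaired dynamics norm bound}) together with the monotonicity of $\eta$ in $t$; your measurable-selection remark and the symmetric two-directional argument for the $d_{\mathrm{R}}$ bound only make explicit what the paper leaves implicit.

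Part (iii), however, has a genuine gap. Your setup is sound --- the box $\{p\}_{\boxplus \eta^*}$ avoids $\partial \mathbb{X}^\rightarrow_t(F,\mathscr{X}_0)$ and hence lies in the interior of $\mathbb{X}^\rightarrow_t(F,\mathscr{X}_0)$, part (i) places a twin $q \in \mathbb{X}^\rightarrow_t(G,\mathscr{X}_0)$ inside that box, and closedness gives an exit point on $[p,q]$ --- but the step that is supposed to close the contradiction is never an argument: you announce an enumeration of ``geometric arrangements of $G_t$ relative to the coordinate hyperplanes through $p$'' and then concede, correctly, that for $n \geq 2$ the axis-shifted witnesses plus path-connectedness do not force $p \in G_t$, because $G_t$ can weave around $p$. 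The proof stops exactly where the difficulty begins. The idea you are missing is the paper's reduction to one dimension: from $p \notin (\partial B)_{\boxplus \eta^*}$ (writing $B = \mathbb{X}^\rightarrow_t(F,\mathscr{X}_0)$, $A = \mathbb{X}^\rightarrow_t(G,\mathscr{X}_0)$) one extracts a \emph{single} distinguished axis $i$ with $\min\{|p_i - y_i| : y \in \partial B\} > \eta^*_i$; since $A$ and $B$ are compact, connected and nonempty, their projections $A_i, B_i$ onto that axis are closed intervals, and the contradiction is obtained entirely on the real line by enumerating the twelve possible orderings of the four scalars $p_i$, $x^*_i$, $y^*_i$, $y_i$ (where $y^*$ realizes the minimum over $\partial B$, $y_i$ is the other endpoint of $B_i$, and $x^* \in A$ is the hyperrectangular twin of $y^*$), each ordering being either inadmissible or in violation of the assumed inequality. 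This one-dimensional ordering argument is what replaces your unresolved multidimensional ``weaving'' analysis; without it, or a worked-out substitute, part (iii) is unproven.
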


\begin{proof}
    
    
    \begin{enumerate}[label=(\roman*), wide, labelwidth=!, labelindent=0pt]
    \item This fact follows directly from Lemma~\ref{lemma:hyperrectangular generalized boundedness of trajectory deviation}.
    \item From (i), the maximal distance between two points in $\mathbb{X}_t^\rightarrow (F, \mathscr{X}_0)$ and $\mathbb{X}_t^\rightarrow (G, \mathscr{X}_0)$ is upper-bounded by $\eta(t, \epsilon)$. In Theorem~\ref{thm:generalized Bihari inequality for controlled dynamical systems} it was shown that $\eta(t, \epsilon)$ is increasing in $t$, meaning that the hyperrectangular distance bound holds for all times $t \leq t_0 + T$.
    
    \item  We define $A = \mathbb{X}^\rightarrow_t (G, \mathscr{X}_0)$ and $B = \mathbb{X}^\rightarrow_t (F, \mathscr{X}_0)$ for any $t \in [t_0, t_0 + T]$. Note that in this proof, unlike in \cite{El-Kebir2021a}, $A$ and $B$ need not be convex, making the proof more technically challenging. We wish to show that $C := B \setminus (\partial B)_{\boxplus \eta^*}$ is a subset of $A$.
    
    The inclusion $C \subseteq A$ can equivalently be shown by demonstrating that for all $x \in B \setminus A$, we have $x \in (\partial B)_{\boxplus \eta^*}$; we prove the latter claim by contradiction. Assume that there indeed exists $x \in B \setminus A$ such that $x \not\in (\partial B)_{\boxplus \eta^*}$. This point then will have some $i \in \{1, \ldots, n\}$, such that have $d_i (x, \partial B) := \min \{|x_i - y_i| : y \in \partial B\} > \eta_i^*$. From the characterization of $\eta^*$ in Lemma~\ref{lemma:hyperrectangular generalized boundedness of trajectory deviation} and the definition of the hyperrectangular distance in Definition~\ref{def:hyperrectangular distance}, we have $d_{\text{R}}(A, B) \leq \eta^*$. In light of the contradiction given above, this inequality would then necessarily yield the following equivalent contradiction:
    \begin{equation}\label{eq:hyperrectangular distance contradiction}
    \begin{split}
        d_i (x, \partial B) > \max\{ &\max_{x' \in A} \min \{|x_i' - y_i| : y \in B\}, \\
        &\max_{x' \in B} \min \{|x_i' - y_i| : y \in A\} \},
    \end{split}
    \end{equation}
    which implies
    \begin{equation*}
        \min \{ |x_i - y_i| : y \in \partial B\} > \max_{x' \in \partial B} \min \{ |x_i' - y_i| : y \in A \}.
    \end{equation*}
    
    Let $y^* \in \partial B$ be such that $d_i (x, y^*) = \min \{ |x_i - y_i| : y \in \partial B \}$. By the hypotheses, in particular the compactness of $B$, there exists some $x^* \in A$ such that $|x^* - y^*| \preceq \eta^*$.
    
%
    
    We identify two cases: a) $x \in \partial B \cap (B \setminus A)$, and b) $x \in B \setminus (A \cup \partial B)$. In case a), we find $d_i (x, \partial B) = 0$, which produces the desired contradiction. 
    
    We now consider case b). Let us denote by $X_i$ the projection of all points of a set $X \subseteq \mathbb{R}^n$ onto the $i$-th Cartesian axis, such that $X_i \subseteq \mathbb{R}$. Since $A$ and $B$ are both compact, connected, and nonempty, we find that $A_i$ and $B_i$ are closed intervals in $\mathbb{R}$ for each $i = 1, \ldots, n$. This fact follows trivially by considering that the projection operation is continuous, and continuous maps are \emph{preserving} maps in the sense of \cite[p.~21]{Gerlits2004}, i.e., their images preserve connectedness and compactness. From \cite[Thm.~12.8, p.~116]{Sutherland2009}, any connected subspace of $\mathbb{R}$ is an interval, which shows that the $A_i, B_i$ are compact (or closed) intervals.
    
    We can then note that for any $y_i^*$, there exists some $x^* \in A$ such that $|x_i^* - y_i^*| \leq \eta^*_i$ by Lemma~\ref{lemma:hyperrectangular generalized boundedness of trajectory deviation}.
    Finally, let $y_i \in \partial B_i$ be the other point in the boundary of the interval $B_i$ such that $y_i \neq y_i^*$.
    
    We can identify twelve arrangements of $(x, x^*, y, y^*)$, barring cases of symmetry. Some of these arrangements are inadmissible, as shown below. In what follows, we must have $y^* \neq y$, since $x$ would otherwise be on the boundary of $B$, which was treated as case a). Also, necessarily, $x \neq x^*$, since we have $x \not\in A$. We prove that for each admissible arrangement, the claim of \eqref{eq:hyperrectangular distance contradiction} does not hold; an illustration of some of these arrangements in given in Fig.~\ref{fig:hyperrectangular distance contradictions}. For some $a, b, c, d \in \mathbb{R}$, we will denote the ordering $a < b < c < d$ by the shorthand notation $a, b, c, d$. We have:
    \begin{enumerate}
        \item $x_i^*, x_i, y_i^*, y_i$: Not admissible since $x \not\in B \setminus A$.
        \item $x_i^*, x_i, y_i, y_i^*$; Not admissible since $x \not\in B \setminus A$, and inconsistent with the definition of $y^*$.
        \item $x_i^*, y_i^*, x_i, y_i$: In this case, by Lemma~\ref{lemma:hyperrectangular generalized boundedness of trajectory deviation} we have $d(x_i^*, y_i^*) \leq \eta^*_i$. Since we have $d(x_i, y_i^*) < d(x_i^*, y_i^*)$, we find that $d(x_i, y_i^*) < \eta^*_i$.
        \item $x_i^*, y_i^*, y_i, x_i$: Not admissible since $x \not\in B \setminus A$.
        \item $x_i^*, y_i, x_i, y_i^*$: We have $d(x_i^*, y_i^*) \leq \eta^*_i$ and $d(x_i, y_i^*) < d(x_i^*, y_i^*)$, which implies $d(x_i, y_i^*) < \eta^*_i$.
        \item $x_i^*, y_i, y_i^*, x_i$: Not admissible since $x \not\in B \setminus A$.
        \item $x_i, x_i^*, y_i^*, y_i$: Not admissible since $x \not\in B \setminus A$.
        \item $x_i, x_i^*, y_i, y_i^*$: Not admissible since $x \not\in B \setminus A$, and not consistent with the definition of $y^*$.
        \item $x_i, y_i^*, x_i^*, y_i$: Not admissible since $x \not\in B \setminus A$.
        \item $x_i, y_i, x_i^*, y_i^*$: Not admissible since $x \not\in B \setminus A$, and inconsistent with the definition of $y^*$.
        \item $y_i^*, x_i^*, x_i, y_i$: We have $d(x_i, y_i) \geq d(x_i, y_i^*)$, $d(x_i^*, y_i) > d(x_i, y_i)$, as well as $d(x_i^*, y_i) \leq \eta^*_i$. From this it follows that $d(x_i, y_i^*) < \eta^*_i$.
        \item $y_i^*, x_i, x_i^*, y_i$: We have $d(x_i, y_i^*) < d(x_i^*, y_i^*) \leq \eta^*_i$.
    \end{enumerate}
    
    Having considered all cases, in all admissible scenarios it follows that the statement in \eqref{eq:hyperrectangular distance contradiction} is false. This in turn contradicts the claim that there exists $x \in B \setminus A$ such that $x \not\in (\partial B)_{\boxplus \eta^*}$. Therefore, we have proven that $C \subseteq A$. 

    \end{enumerate}\vspace{-1.25pc}\placeqed\end{proof}
    
We now present two corollaries that cover the case of a changed set of initial conditions, as well as guaranteed overapproximations of the reachable set.
    
\begin{corollary}\label{cor:reachable set guaranteed innerapproximation impaired general changed initial conditions}
    Let the hypotheses of Theorem~\ref{thm:reachable set guaranteed underapproximation impaired general} hold. In addition, let there be an off-nominal set of initial conditions $\bar{\mathscr{X}}_0$ that is nonempty, closed, and connected, such that $d_{\text{H}} (\mathscr{X}_0, \bar{\mathscr{X}}_0) \leq \kappa$. Define $\eta(t, \epsilon, \kappa)$ as in Corollary~\ref{corollary:generalized boundedness of trajectory deviation with changed initial conditions}. Then:
    \begin{enumerate}[label=(\roman*), wide, labelwidth=!, labelindent=0pt]
        \item For each $x_0 \in \mathscr{X}_0$ there exists a trajectory $x(t)$ emanating from $x (t_0) = x_0$ with $\dot{x}(t) \in F(t, x(t))$ and a trajectory $y (t)$ satisfying $y (t_0) = x_0'$, for some $x_0' \in \{x_0\}_{+\kappa} \cap \bar{\mathscr{X}}_0$, and $\dot{y} (t) \in G (t, y(t))$ such that $|x(t) - y(t)| \preceq \eta (t, \epsilon, \kappa)$ for all $t \in [t_0, t_f]$;
        \item Let $T \in [0, t_f - t_0]$, and let $\eta^{**} = \eta(t_0 + T, \epsilon, \kappa)$. For all $t \in [t_0, t_0 + T]$, $d_{\mathrm{R}} [\mathbb{X}^\rightarrow_t (G, \bar{\mathscr{X}}_0), \mathbb{X}^\rightarrow_t (F, \mathscr{X}_0)] \preceq \eta^{**}$;
        \item For all $t \in [t_0, t_0 + T]$, \begin{equation*}
            \mathbb{X}^\rightarrow_t (F, \mathscr{X}_0) \setminus (\partial \mathbb{X}^\rightarrow_t (F, \mathscr{X}_0))_{\boxplus \eta^{**}} \subseteq \mathbb{X}^\rightarrow_t (G, \bar{\mathscr{X}}_0).
        \end{equation*}
    \end{enumerate}
\end{corollary}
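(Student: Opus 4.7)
The plan is to mirror the three-part structure of the proof of Theorem~\ref{thm:reachable set guaranteed underapproximation impaired general}, substituting Corollary~\ref{corollary:generalized boundedness of trajectory deviation with changed initial conditions} for Lemma~\ref{lemma:hyperrectangular generalized boundedness of trajectory deviation} wherever an initial-condition offset enters the estimate, and otherwise recycling the topological arguments verbatim.

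For part (i), I would first invoke the definition of Hausdorff distance: from $d_{\mathrm{H}}(\mathscr{X}_0, \bar{\mathscr{X}}_0) \leq \kappa$ and closedness of $\bar{\mathscr{X}}_0$, for each $x_0 \in \mathscr{X}_0$ the infimum $\inf_{z \in \bar{\mathscr{X}}_0} \Vert x_0 - z \Vert$ is attained by some $x_0' \in \{x_0\}_{+\kappa} \cap \bar{\mathscr{X}}_0$. I then select any nominal $x(\cdot)$ with $\dot{x} \in F(t,x)$ and $x(t_0)=x_0$, and any off-nominal $y(\cdot)$ with $\dot{y} \in G(t,y)$ and $y(t_0)=x_0'$. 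The componentwise Bihari argument underpinning Lemma~\ref{lemma:hyperrectangular generalized boundedness of trajectory deviation} is then repeated along each axis, except that the initial data satisfy $|x_i(t_0) - y_i(t_0)| \leq \kappa$, so each $G_i$ is evaluated at $\kappa + \int_{t_0}^{t} \tilde{b}_i(\tau)\,\mathrm{d}\tau$ rather than at $\int_{t_0}^{t} \tilde{b}_i(\tau)\,\mathrm{d}\tau$. This yields exactly $\eta(t,\epsilon,\kappa)$ as defined in Corollary~\ref{corollary:generalized boundedness of trajectory deviation with changed initial conditions}, generalized componentwise.

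Part (ii) is then a direct monotonicity consequence: Theorem~\ref{thm:generalized Bihari inequality for controlled dynamical systems} already established that each $G_i^{-1}$ and the inner integrals are monotonically nondecreasing in $t$, so $\eta(t,\epsilon,\kappa) \preceq \eta(t_0+T,\epsilon,\kappa) = \eta^{**}$ for all $t \in [t_0, t_0+T]$. Taking suprema and infima over admissible pairs of trajectories and initial states and then projecting componentwise turns the trajectory-level bound of (i) into the claimed hyperrectangular distance bound between the two reachable sets.

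For part (iii), I would redefine $A := \mathbb{X}^\rightarrow_t(G,\bar{\mathscr{X}}_0)$ and $B := \mathbb{X}^\rightarrow_t(F,\mathscr{X}_0)$, replace $\eta^*$ with $\eta^{**}$, and re-run the twelve-arrangement contradiction analysis of Theorem~\ref{thm:reachable set guaranteed underapproximation impaired general}(iii) verbatim, since it uses only the compactness, path-connectedness, and interval-projection structure of $A$ and $B$ together with the distance bound from part (ii). The anticipated obstacle is ensuring that $A$ is still a path-connected continuum when $\bar{\mathscr{X}}_0$ is only hypothesized to be closed and connected: Lemma~\ref{lemma:connectedness of reachable set} as stated needs a path-connected continuum as the initial set. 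I would resolve this by noting that the conditions on $G$ inherited from Theorem~\ref{thm:reachable set guaranteed underapproximation impaired general} make the solution multifunction $S_G$ continuous in its initial condition (\cite[Cor.~4.5]{Zhu1991}), so that the image of the continuous connected set $\bar{\mathscr{X}}_0$ under the continuous evaluation map $E_t \circ S_G$ is itself connected and compact in $\mathbb{R}^n$; its $i$-th projection $A_i$ is therefore a compact interval by \cite[Thm.~12.8, p.~116]{Sutherland2009}. With both $A_i$ and $B_i$ compact intervals, the case-by-case analysis of the theorem applies unchanged, producing the claimed inclusion $\mathbb{X}^\rightarrow_t (F, \mathscr{X}_0) \setminus (\partial \mathbb{X}^\rightarrow_t (F, \mathscr{X}_0))_{\boxplus \eta^{**}} \subseteq \mathbb{X}^\rightarrow_t (G, \bar{\mathscr{X}}_0)$.
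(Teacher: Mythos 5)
Your proposal is correct and follows essentially the same route as the paper: the authors' own proof is a one-line remark that the result ``immediately follows by consideration of Corollary~\ref{corollary:generalized boundedness of trajectory deviation with changed initial conditions},'' i.e., by rerunning the argument of Theorem~\ref{thm:reachable set guaranteed underapproximation impaired general} with the $\kappa$-offset deviation bound $\eta(t,\epsilon,\kappa)$ in place of $\eta(t,\epsilon)$, which is exactly what you spell out. Your additional observation that $\bar{\mathscr{X}}_0$ must still yield a compact, connected reachable set (so that its axis projections are closed intervals in the case analysis of part (iii)) is a detail the paper leaves implicit, and your patch via continuity of the solution multifunction is sound, noting also that $\bar{\mathscr{X}}_0$ is automatically bounded because it lies in the $\kappa$-fattening of the compact set $\mathscr{X}_0$.
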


\begin{proof}
    The proof immediately follows by consideration of Corollary~\ref{corollary:generalized boundedness of trajectory deviation with changed initial conditions}. 
\placeqed\end{proof}
    
\begin{corollary}\label{cor:reachable set guaranteed overapproximation impaired general}
    Let the hypotheses of Theorem~\ref{thm:reachable set guaranteed underapproximation impaired general} and Corollary~\ref{cor:reachable set guaranteed innerapproximation impaired general changed initial conditions} hold. Then:
    \begin{enumerate}[label=(\roman*)]
        \item For each $x_0 \in \mathscr{X}_0$ there exists a trajectory $x(t)$ emanating from $x (t_0) = x_0$ with $\dot{x}(t) \in F(t, x(t))$, and a trajectory $y (t)$ satisfying $y (t_0) = y_0$ with $y_0 \in \bar{\mathscr{X}}_0$ such that $\Vert x_0 - y_0 \Vert \leq \kappa$ and $\dot{y} (t) \in G (t, y (t))$, such that $|x(t) - y (t) | \preceq \eta(t, 2\delta + \epsilon, \kappa)$ for all $t \in [t_0, t_f]$ and $i = 1, \ldots, n$, where $\delta := \max_{u \in \mathscr{U}} \Vert u \Vert$;
        \item For all $t \in [t_0, t_f]$, $d_{\mathrm{R}} [\mathbb{X}^\rightarrow_t (G, \bar{\mathscr{X}}_0), \mathbb{X}^\rightarrow_t (F, \mathscr{X}_0)] \preceq \eta (t_f, 2\delta + \epsilon, \kappa)$;
        \item For all $t \in [t_0, t_f]$, \begin{equation*}
            \mathbb{X}^\rightarrow_t (G, \bar{\mathscr{X}}_0) \subseteq (\mathbb{X}^\rightarrow_t (F, \mathscr{X}_0))_{\boxplus \eta(t_f, 2\delta + \epsilon, \kappa)}.
        \end{equation*}
    \end{enumerate}
\end{corollary}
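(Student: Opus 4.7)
The plan is to mirror the construction of Corollary~\ref{cor:reachable set guaranteed innerapproximation impaired general changed initial conditions} but in the reverse direction: pair each off-nominal trajectory with a suitably chosen nominal trajectory, absorbing the unavoidable worst-case control mismatch into the enlarged argument $2\delta + \epsilon$ of $\eta$. The reason the inner-approximation argument does not directly dualize is that, whereas a nominal control $u(\cdot) \in \mathbb{U}$ always admits a closely-matching off-nominal control under the hyperrectangular distance bound, an arbitrary off-nominal control $\bar u(\cdot) \in \bar{\mathbb{U}}$ generally does not admit a close nominal counterpart, so the matching bound degrades to a triangle-inequality estimate.

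For part (i), fix an off-nominal trajectory $y(\cdot)$ starting at $y_0 \in \bar{\mathscr{X}}_0$ driven by some $\bar u(\cdot) \in \bar{\mathbb{U}}$. Using $d_{\mathrm{H}}(\mathscr{X}_0, \bar{\mathscr{X}}_0) \leq \kappa$, I would select $x_0 \in \mathscr{X}_0$ with $\Vert x_0 - y_0 \Vert \leq \kappa$ and generate $x(\cdot)$ with any admissible $u(\cdot) \in \mathbb{U}$. Because $\Vert u(t) \Vert \leq \delta$ and, by $d_{\mathrm{R}}(\mathscr{U}, \mathscr{V}) \preceq \epsilon$ together with compactness of $\mathscr{U}$, $\Vert \bar u(t) \Vert \leq \delta + \epsilon$, the triangle inequality gives $\sup_t \Vert u(t) - \bar u(t) \Vert \leq 2\delta + \epsilon$. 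I would then decompose
\begin{equation*}
g(t, y, \bar u) - f(t, x, u) = \bigl[g(t, y, \bar u) - f(t, y, \bar u)\bigr] + \bigl[f(t, y, \bar u) - f(t, x, u)\bigr],
\end{equation*}
bound the first bracket by $\gamma(t)$ via Assumption~\ref{assumption:off-nominal dynamics norm bound}, and bound the second by $\bar a(t)\,\bar w(\Vert x - y\Vert, 2\delta + \epsilon) + \bar b(t)$ via Assumption~\ref{assumption:nonlinear growth condition}. Collecting these into the modified inhomogeneity $\hat b(t) := \bar b(t) + \gamma(t)$, exactly as in Lemma~\ref{lm:impaired dynamics norm bound}, and then invoking Corollary~\ref{corollary:generalized boundedness of trajectory deviation with changed initial conditions} with the initial discrepancy $\kappa$ yields $\Vert x(t) - y(t) \Vert \leq \eta(t, 2\delta + \epsilon, \kappa)$, and the componentwise form $|x(t) - y(t)| \preceq \eta(t, 2\delta + \epsilon, \kappa)$ follows since $|x_i - y_i| \leq \Vert x - y \Vert$ for every $i$.

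Part (ii) combines (i) with Corollary~\ref{cor:reachable set guaranteed innerapproximation impaired general changed initial conditions}(ii): (i) gives that every point of $\mathbb{X}^\rightarrow_t(G, \bar{\mathscr{X}}_0)$ is within $\eta(t_f, 2\delta + \epsilon, \kappa)$ of some point of $\mathbb{X}^\rightarrow_t(F, \mathscr{X}_0)$, and the reverse direction (every point of the nominal reachable set is close to the off-nominal one) is supplied by Corollary~\ref{cor:reachable set guaranteed innerapproximation impaired general changed initial conditions}(ii) with the sharper bound $\eta(t, \epsilon, \kappa)$, which is dominated by $\eta(t_f, 2\delta + \epsilon, \kappa)$ by the monotonicity of $\eta$ in both $t$ and its second argument established in Theorem~\ref{thm:generalized Bihari inequality for controlled dynamical systems}. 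Part (iii) is then a direct consequence of the equivalence noted in the remark following Definition~\ref{def:hyperrectangular distance}: a componentwise distance bound on $d_{\mathrm{R}}$ translates immediately into the desired hyperrectangular fattening containment.

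The main obstacle I expect is bookkeeping rather than any deep difficulty: one must carefully justify $\Vert \bar u(t) \Vert \leq \delta + \epsilon$ from the componentwise hypothesis $d_{\mathrm{R}}(\mathscr{U}, \mathscr{V}) \preceq \epsilon$ (as opposed to a Euclidean Hausdorff bound), since Assumption~\ref{assumption:nonlinear growth condition} is phrased in terms of $\Vert u \Vert$. Apart from this conversion and the correct combination of the two directions of $d_{\mathrm{R}}$ in part (ii), every remaining step is a direct invocation of results already established earlier in the section.
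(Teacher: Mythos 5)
Your proposal is correct and follows essentially the same route as the paper: part (i) is the triangle-inequality bound $\Vert u(t)-\bar u(t)\Vert\leq 2\delta+\epsilon$ fed through the decomposition of Lemma~\ref{lm:impaired dynamics norm bound} and Corollary~\ref{corollary:generalized boundedness of trajectory deviation with changed initial conditions}, and parts (ii)--(iii) translate the resulting componentwise deviation bound into a $d_{\mathrm{R}}$ bound and then a $\boxplus$-fattening containment. You simply spell out details the paper leaves implicit (the two one-sided distances in (ii), and the fattening/$d_{\mathrm{R}}$ equivalence in (iii) in place of re-running the case analysis of Theorem~\ref{thm:reachable set guaranteed underapproximation impaired general}(iii)), so no substantive difference in approach.
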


\begin{proof}\begin{enumerate}[label=(\roman*), wide, labelwidth=!, labelindent=0pt]
    \item This claim follows from Lemma~\ref{lm:impaired dynamics norm bound}, where we have used the following inequality:
    \begin{equation*}
        \Vert u(t) - v(t) \Vert \leq \Vert u(t) \Vert + \Vert v(t) \Vert \leq \delta + \delta + d_{\mathrm{H}} (\mathscr{U}, \mathscr{V}) \leq 2\delta + \epsilon,
    \end{equation*}
    which follows from the triangle inequality, as well as the definition of the Hausdorff distance.
    \item This fact follows directly from Lemma~\ref{lm:impaired dynamics norm bound}.
    \item The proof here is similar to that of Theorem~\ref{thm:reachable set guaranteed underapproximation impaired general}(iii), where we consider that any point in $A$ has a counterpart in $B$ that is distance $ \eta(t_f, 2\delta + \epsilon)$ away. The proof is immediate from this consideration and Lemma~\ref{lemma:hyperrectangular generalized boundedness of trajectory deviation}.
\end{enumerate}
\vspace{-1.25pc}\placeqed\end{proof}

\begin{remark}
    In Corollaries~\ref{cor:reachable set guaranteed innerapproximation impaired general changed initial conditions} and \ref{cor:reachable set guaranteed overapproximation impaired general}, the Hausdorff distance upper bound $\kappa$ on the initial set of states does not decrease the quality of the inner approximation with increasing time, similar to the quantity $2\delta + \epsilon$. In fact, the approximations decrease in tightness with increasing time solely on account of the `looseness' of the functions $\tilde{a}, \tilde{b}, \tilde{w}$ of Lemma~\ref{lemma:hyperrectangular generalized boundedness of trajectory deviation} that upper-bound the trajectory deviation growth.
\end{remark}
    
%

\section{Simulation results}
\label{sec:simulations}

We consider three numerical examples: a simplified representation of the heading dynamics of a sea-faring vessel, and lower triangular dynamical system, and an interconnected system of linear subsystems. The restriction to lower dimension systems stems from computational limitations in obtaining the nominal reachable sets with sufficient accuracy, as well as a desire to keep derivations concise. We will show how Theorem~\ref{thm:reachable set guaranteed underapproximation impaired general} and Corollary~\ref{cor:reachable set guaranteed overapproximation impaired general} can be applied to these systems. For both examples, we have used the CORA MATLAB toolkit \cite{Althoff2018} to compute the nominal and off-nominal reachable sets for illustrative purposes; in reality, such tools are not required to apply the theory presented here. In practice, the nominal reachable set would be computed prior to the system's operation using a similar toolkit. The methods used in such toolkits can often not be used online because of hardware limitations and poor scalability, hence the need for an approach such as ours.

In practice, it is difficult to obtain a hyperrectangular slimming of the form $X \setminus (\partial X)_{\boxplus \rho}$ using widely used software packages. For this reason, we propose an alternative using a conservative ball-based slimming operation. It is obvious that the following holds:
\begin{equation}\label{eq:ball-based slimming approximation}
    X \setminus (\partial X)_{+ \Vert \rho \Vert} \subseteq X \setminus (\partial X)_{\boxplus \rho},
\end{equation}
where $\Vert \rho \Vert$ denotes the Euclidean norm of the vector $\rho$. This follows from the fact that the ball $\mathcal{B}_{+ \Vert \rho \Vert}$ includes the hyperrectangle $\cart_{i =  1}^n [-\rho_i, \rho_i]$. In the following, we will show approximations based on naive ball-based slimming using single elements $\rho$, which give an indication of the shape of a true hyperrectangular slimming in that particular dimension. We also give a guaranteed inner approximation by applying a ball-based slimming operation with radius $\Vert \rho \Vert$.

Compared to \cite{El-Kebir2021a}, this approximation approach yields tighter approximations, since the bounds obtained there are greater or equal to $\Vert \rho \Vert$, as a bound on the Euclidean norm of the trajectory deviation is used there.

%

\subsection{Norrbin's Ship Steering Dynamics}

We first consider Norrbin's model of the heading dynamics of a ship sailing at constant velocity \cite{Fossen1992}:
\begin{equation*}
    \dot{x}(t) = \begin{bmatrix} \dot{x}_1 (t) \\ \dot{x}_2 (t) \end{bmatrix} = f(x(t), u(t)) = \begin{bmatrix}
        x_2 \\
        -\frac{v}{2 l} (x_2 + x_2^3)
    \end{bmatrix}
    +
    \begin{bmatrix}
        0 \\
        \frac{v^2}{2 l^2}
    \end{bmatrix}
    u,
\end{equation*}
where $x_1$ is the heading (or yaw) angle, and $x_2$ is the heading rate; in this example, $u$ denotes the rudder angle, $v$ denotes the fixed cruise speed, and $l$ denotes the vessel length. As can be observed in the dynamics, a vessel's ability to make turns is strongly correlated with its velocity (higher speeds provide greater resistance, but induce stronger rudder authority), as well as the length of the vessel. Intuitively, a longer vessel is harder to turn due to its inertia and hydrodynamic resistance of the hull. The dynamics are of second order, as a rudder deflection naturally induces a yaw moment.

We can find the following bound on trajectory deviation growth:
\begin{equation}\label{eq:norrbin deviation growth bound}
\begin{split}
    &|\tilde{f}(\bar{x}, \bar{u})| = |f (x + \tilde{x}, u + \tilde{u}) - f(x, u)| \\
    &\preceq \begin{bmatrix}
        |\tilde{x}_2| \\
        \frac{v}{2 l} (|\tilde{x}_2| + |\tilde{x}_2|^3 + 3 |\tilde{x}_2|^2 |M_2| + 3 |\tilde{x}_2| |M_2|^2) + \frac{v^2}{2 l^2} |\tilde{u}|
     \end{bmatrix},
\end{split}
\end{equation}
where $M_2 = \max_{y \in \mathbb{X}_{t}^\rightarrow (F, \mathscr{X}_0)} |y_2|$. $M_2$ can be determined since the nominal reachable set $\mathbb{X}_{t}^\rightarrow (F, \mathscr{X}_0)$ is available to us. We note that \eqref{eq:norrbin deviation growth bound} contains an integrator in state $\tilde{x}_1$, which allows us to obtain a hyperrectangular trajectory deviation bound as follows. We first compute the deviation bound on state $\tilde{x}_2$, such that $|\tilde{x}_2(t)| \leq \eta_2 (t)$. We then compute an upper bound on $\tilde{x}_1$, which is of the form $\eta_1 (t) = \int_{t_0}^t \eta_2 (\tau) \ \text{d}\tau$, which gives $|\tilde{x}_1(t)| \leq \eta_1 (t)$. Alternatively, a more conservative expression for $\eta_1$ can be obtained by defining it as $\eta_1 (t) := \eta(t_f) t$, which follows from the fact that $\eta$ is strictly increasing (see the proof of Theorem~\ref{thm:generalized Bihari inequality for controlled dynamical systems}).

\subsubsection{Diminished Control Authority}

In this example, we define multifunction $F$ as $F(x) := f(x, \mathscr{U})$, with $\mathscr{U} = [-25^\circ, 25^\circ]$ and the impaired control set is $\bar{\mathscr{U}} = [-20^\circ, 20^\circ]$, hence $\epsilon = d_{\mathrm{H}} (\mathscr{U}, \bar{\mathscr{U}}) = 5^\circ$. We consider the initial set of states to be a singleton: $\mathscr{X}_0 = \{[\begin{smallmatrix} 0^\circ & 5^\circ/\text{s} \end{smallmatrix}]\transp\}$. The nominal velocity is taken as $v = 5$ m/s, and the length of the vessel is $l = 45$ m.

We first consider the case of diminished control authority, i.e., the case in which the system dynamics remain the same, but the control inputs are draw from $\bar{\mathscr{U}}$ instead of $\mathscr{U}$.

We evaluate the reachable set at $t = 0.5$ s, $t = 1$ s, and $t = 3$ s, yielding the results shown in Fig.~\ref{fig:norrbin diminished}. We have given a guaranteed inner approximation based on the conservative ball-based slimming approach, as well as guaranteed intervals in each Cartesian axis using the entries of $\eta(t)$. These guaranteed intervals are shown as cross-hatched areas, and give an indication of what a hyperrectangular slimming would have produced, in addition to providing a guarantee that there exists at least one state in the off-nominal reachable set that has one of its coordinates on one of the intervals. Unlike the results in \cite{El-Kebir2021a}, the quality of the inner approximations degrades little with time (see Fig.~\ref{fig:norrbin diminished 3000}). This feature can be attributed to the fact that we are using a hyperrectangular growth bound in this work, as opposed to a more conservative norm-based bound.

An application to computing a guaranteed reachable set of the positions of the ship after control authority diminishment based on Norrbin's model has been prepared as a video\footnote{Demonstration of the computation of a guaranteed reachable set for the Norrbin ship model under diminished rudder authority: \url{https://youtu.be/5eUINOGJ_0Y}}.

\begin{figure*}[!t]
\centering
\begin{subfigure}[t]{0.2\linewidth}
    \centering
    \includegraphics[height=0.23\paperheight]{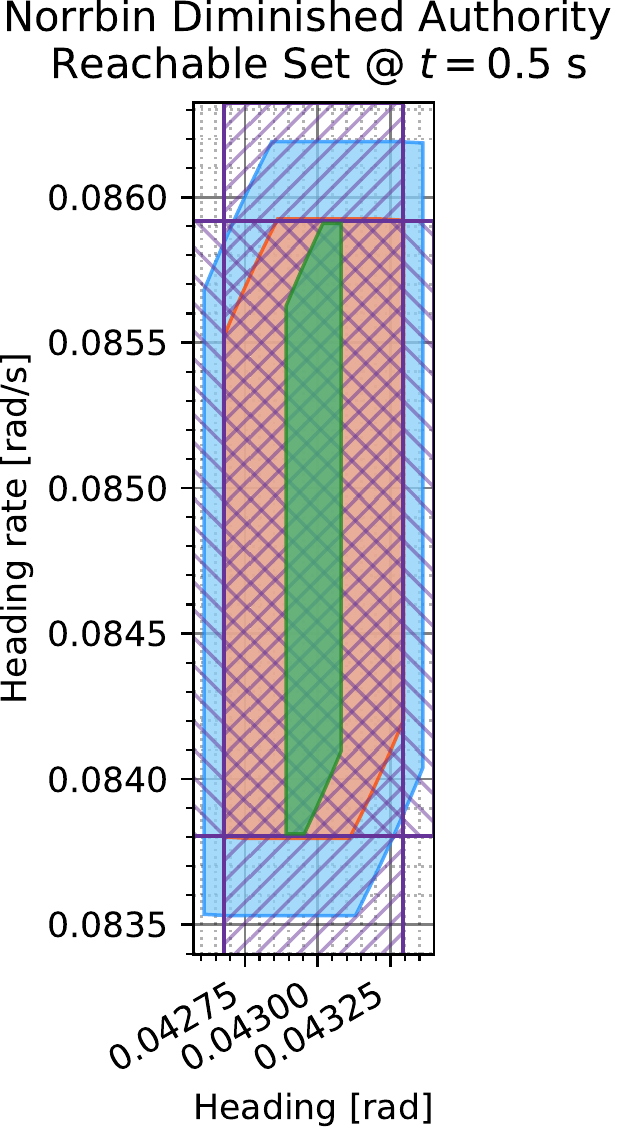}
    \caption{Reachable sets at 0.5 s}
    \label{fig:norrbin diminished 500}
\end{subfigure}
\hfill
\begin{subfigure}[t]{0.2\linewidth}
    \centering
    \includegraphics[height=0.23\paperheight]{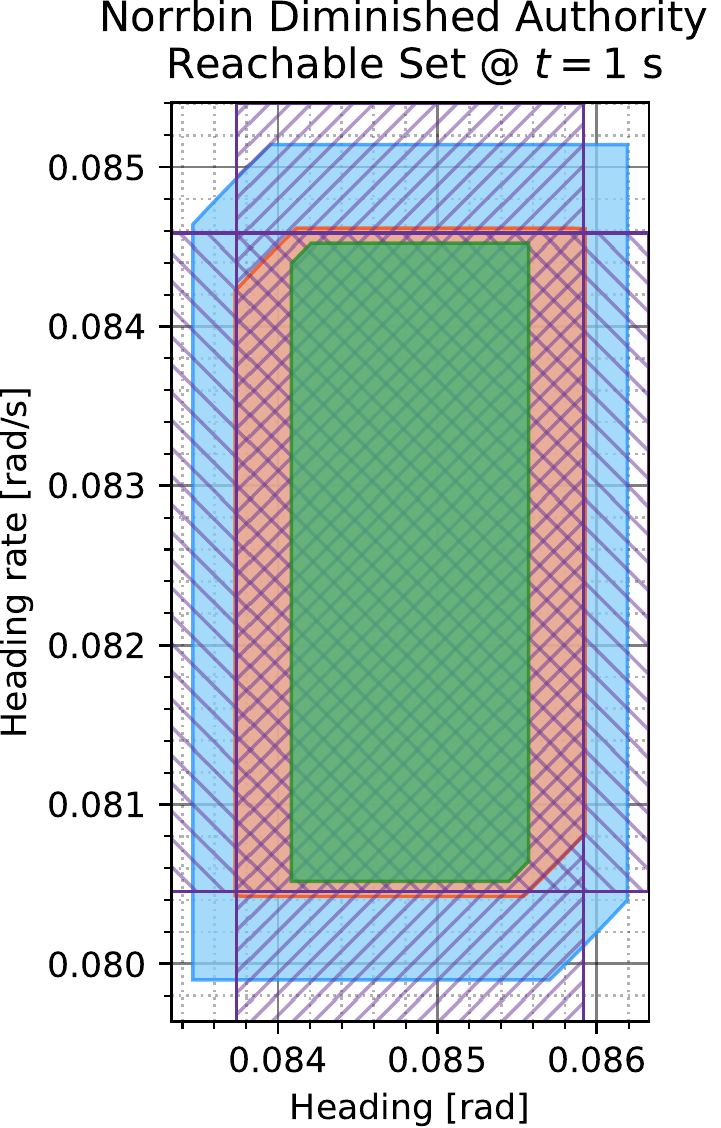}
    \caption{Reachable sets at 1 s}
    \label{fig:norrbin diminished 1000}
\end{subfigure}
\hfill
\begin{subfigure}[t]{0.5\linewidth}
    \centering
    \includegraphics[height=0.23\paperheight]{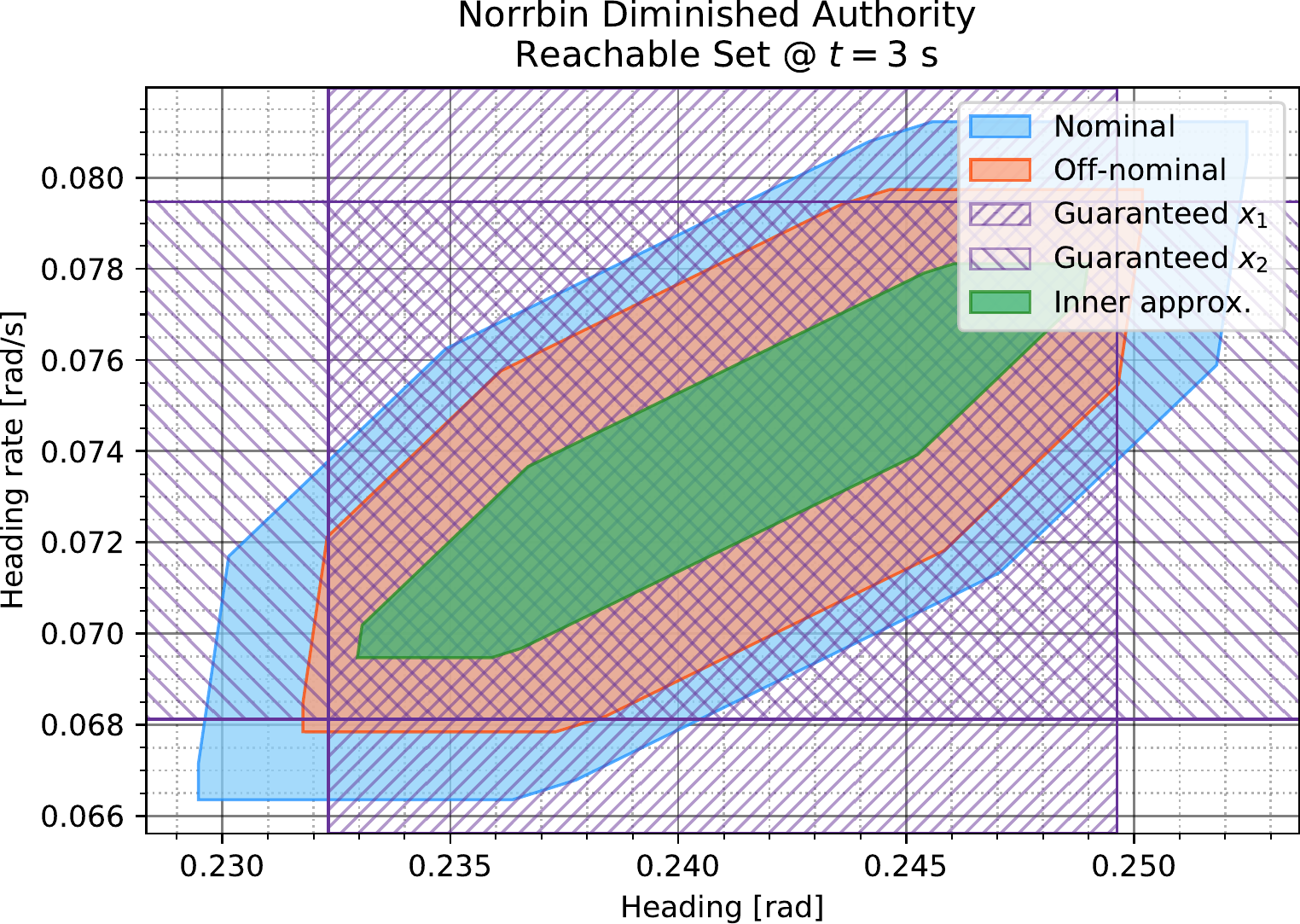}
    \caption{Reachable sets at 3 s}
    \label{fig:norrbin diminished 3000}
\end{subfigure}
\caption{Inner approximation of the off-nominal reachable set of the Norrbin model in the case of diminished control authority, as obtained using a ball-based slimming operation. The cross-hatched areas each denote an interval on the $i$-th Cartesian dimension in which it is guaranteed that there exists at least one state $y$ in the off-nominal reachable set such that $y_i = x_i$, where $x_i$ lies on the interval.}
\label{fig:norrbin diminished}
\end{figure*}

\subsubsection{Changed Dynamics}

In addition to the diminished control authority, we now also consider the following changed dynamics:
\begin{equation*}
    \dot{x}(t) = g(x(t), u(t)) = \begin{bmatrix}
        x_2 \\
        -\frac{v_{\text{s}}}{2 l} (x_2 + x_2^3)
    \end{bmatrix}
    +
    \begin{bmatrix}
        0 \\
        \frac{v_{\text{s}}^2}{2 l^2}
    \end{bmatrix}
    u,
\end{equation*}
where we consider $v_s < v$ and $v_s > v$, to capture a slowdown and speedup of the vessel, respectively.

\paragraph{Slowdown}
We first consider $v_{\text{s}} = 0.95 v = 4.75$ m/s. This slowdown causes the reachable set to shrink and shift slightly towards higher heading angles, since there is insufficient velocity to reach lower angles. The bound given in Lemma~\ref{lm:impaired dynamics norm bound} is used, where we define $\gamma(t)$  as follows:
\begin{equation*}
    \gamma(t) := \frac{|v_{\text{s}} - v|}{2 l} (m_2 + m_2^3) + \frac{v^2 - v_{\text{s}}^2}{2 l^2} \max_{u \in \mathscr{U}} |u|,
\end{equation*}
where $m_2 = \min_{y \in \mathbb{X}_{t}^\rightarrow (F, \mathscr{X}_0)} y_2$. Combining this bound with the trajectory deviation growth bound given at the beginning of this section, we obtain the conservative inner approximation shown in Fig.~\ref{fig:norrbin slowdown}. As can be clearly seen, not only is the off-nominal reachable set smaller, but it has also drifted to the top-left. This change is intuitively correct, since at a slower velocity rudder inputs become more effective as the vessel can make tighter turns at slower speeds. This phenomenon is reflected in the upward shift in heading rate and heading angle. A large area of the actual off-nominal set is lost due to the need for coping with

\begin{figure}[h]
\centerline{\includegraphics[height=0.23\paperheight]{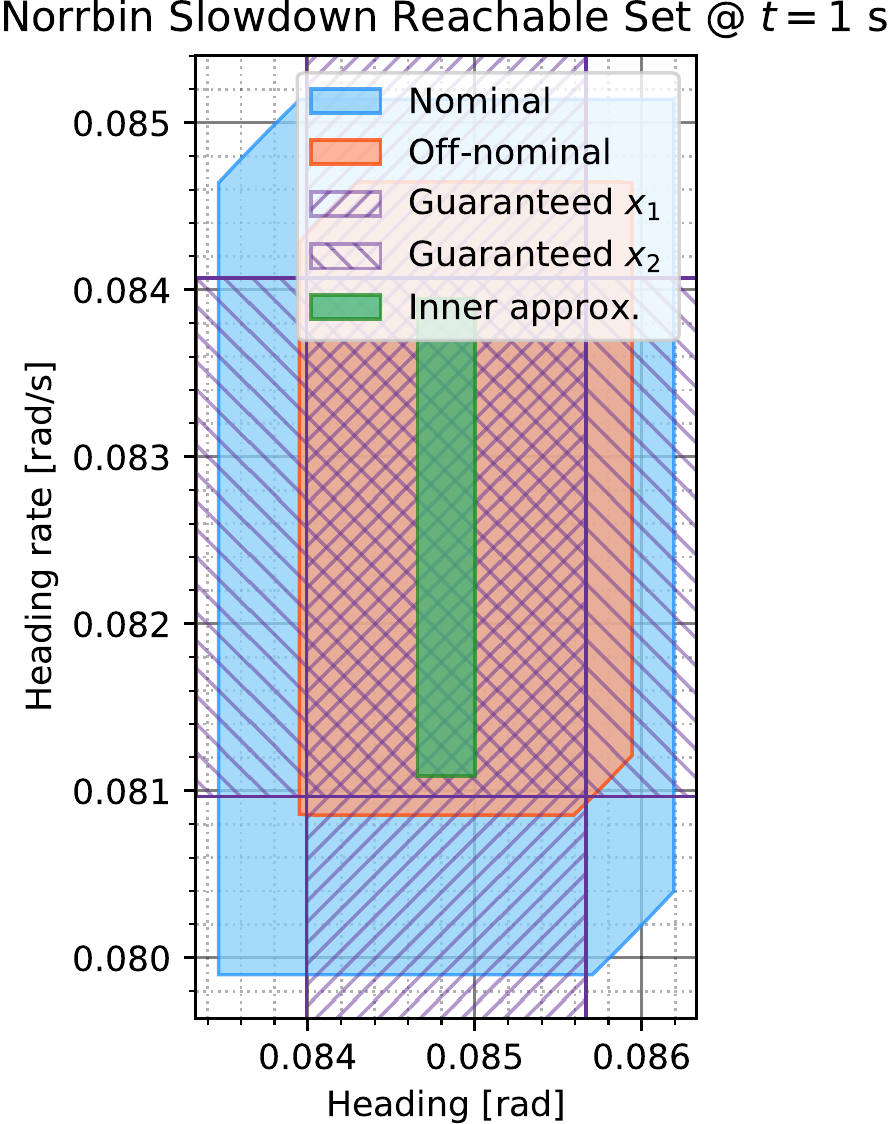}}
\caption{Inner approximation of the off-nominal reachable set of the Norrbin model in the case of decreased speed, as obtained using a conservative ball-based slimming operation.}
\label{fig:norrbin slowdown}
\end{figure}

\paragraph{Speedup}

We now demonstrate outer approximations in the case of changed dynamics. We still consider a diminishment in control authority, but in this case, $v_{\text{s}} = 1.4 v = 7$ m/s, indicating a speedup. In practice, one would like to know what the worst case trajectory could be in such a case, for example when attempting to avoid a high speed vessel. Instead of shrinking the nominal set, we now fatten it as shown in Corollary~\ref{cor:reachable set guaranteed overapproximation impaired general}. Our $\gamma(t)$ in this setting is as follows:
\begin{equation*}
    \gamma(t) := \frac{v_{\text{s}} - v}{2 l} (M_2 + M_2^3) + \frac{|v^2 - v_{\text{s}}^2|}{2 l^2} \max_{u \in \mathscr{U}} |u|.
\end{equation*}

With a similar trajectory deviation growth bound as in the case of a slowdown, we obtain an outer approximation as shown in Fig.~\ref{fig:norrbin speedup}, this time with an exact hyperrectangular fattening. In this case, it is also possible to apply a conservative ball-based fattening using the same radius as given previously.

In Fig.~\ref{fig:norrbin speedup}, it is clear that the off-nominal reachable set has shifted towards lower heading angles as rates, since the vessel will have less effective rudder authority at higher cruise speeds due to the its inertia. As a result, the outer approximation includes a large area of unused space towards the top right, since it needs to make up for both the translation and growth of the off-nominal reachable set with respect to the nominal reachable set.
\begin{figure}[h]
\centerline{\includegraphics[height=0.23\paperheight]{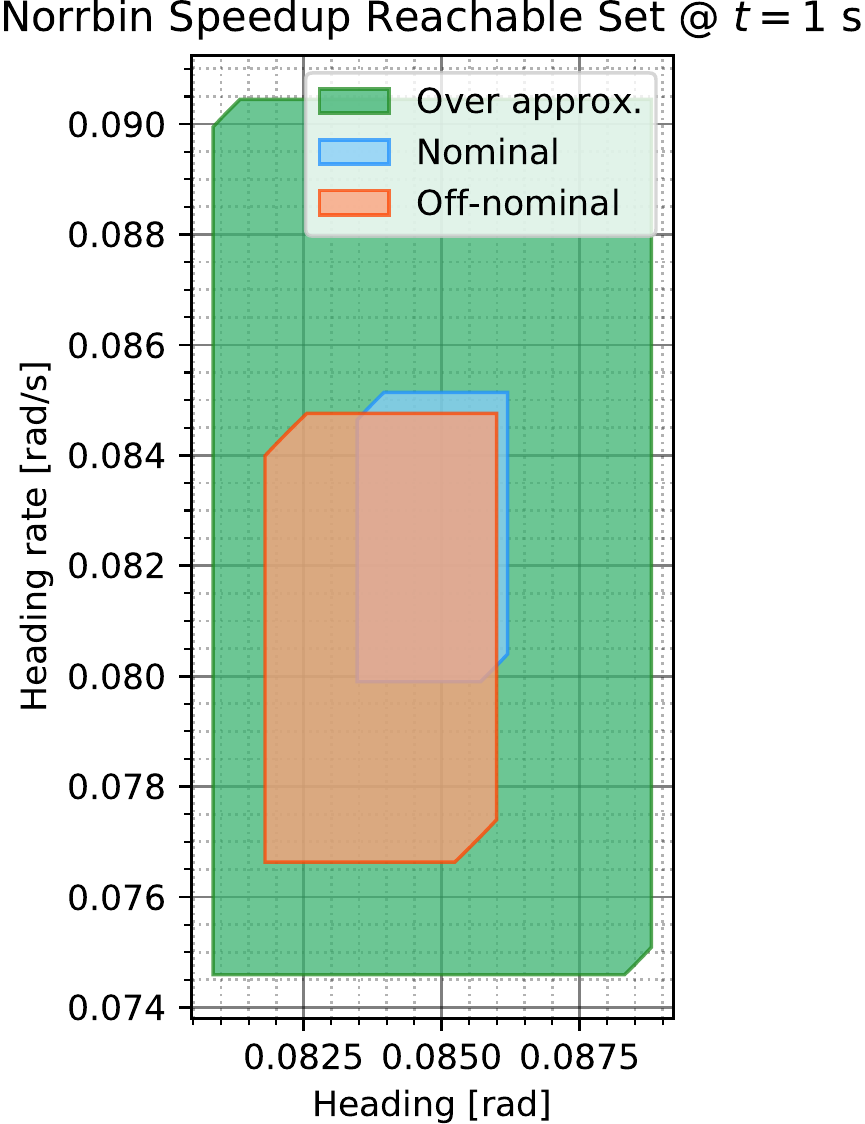}}
\caption{Outer approximation of the off-nominal reachable set of the Norrbin model in the case of increased speed, as obtained using a hyperrectangular fattening.}
\label{fig:norrbin speedup}
\end{figure}

\subsection{Cascaded System}\label{subsec:Scalable System Example}

To demonstrate that the approach given in Theorem~\ref{thm:reachable set guaranteed underapproximation impaired general} is scalable for high-dimensional systems, we present the following academic example.
We consider a lower triangular system; such systems often arise in practice when dealing with interconnected dynamical systems \cite{Zhang2013a}. Namely, we consider the system:
\begin{equation}\label{eq:academic example dynamics}
    \dot{x}(t) = \begin{bmatrix}
        \dot{x}_1 (t) & \cdots & \dot{x}_n (t)
    \end{bmatrix}\transp = A x(t) + B u(t) + d(t),
\end{equation}
where $x \in \mathbb{R}^n$ and $u \in \mathscr{U} \subseteq \mathbb{R}^m$, $A \in \mathbb{R}^{n \times n}$ is a lower triangular matrix, $B \in \mathbb{R}^{n \times m}$ is arbitrary, and $d : [t_0, \infty) \to \mathbb{R}^n$ is a differentiable function. The contribution of $d(t)$ is that of a nonlinear drift, possibly due to phenomena such as actuator bias or periodic disturbances.

We consider both the case of diminished control authority and changed dynamics below.

\subsubsection{Diminished Control Authority}

We consider an off-nominal set of admissible control inputs $\bar{\mathscr{U}} \subseteq \mathscr{U}$ such that $d_{\text{H}} (\mathscr{U}, \bar{\mathscr{U}}) \leq \epsilon$. Let $\dot{\tilde{x}}(t) = x(t) - \bar{x}(t) = A \tilde{x}(t) + B \tilde{u}(t)$, where $x(t)$ is a solution of the nominal system, and $\bar{x}(t)$ corresponds to the off-nominal system. It is then straightforward to show that a hyperrectangular trajectory deviation bound can be computed as follows:
\begin{equation}\label{eq:academic example hyperrectangular growth bound}
    |\dot{\tilde{x}}_i (t)| \leq \sum_{j = 1}^{i - 1} |a_{i,j}| \rho_j (t) + |a_{i,i}| |\tilde{x}_i (t)| + \sum_{j=1}^m |b_{i,j}| \epsilon,
\end{equation}
where each $\rho_j (t)$ is computed as per Lemma~\ref{lemma:hyperrectangular generalized boundedness of trajectory deviation}.
We will now show how to obtain the hyperrectangular trajectory deviation bound. Given the lower triangular structure of matrix $A$, by \eqref{eq:academic example hyperrectangular growth bound} we first compute $\rho_1 (t)$ from the following growth bound:
\begin{equation*}
    |\dot{\tilde{x}}_1 (t)| \leq |a_{1,1}| |\tilde{x}_1 (t)| + \sum_{j=1}^m |b_{1,j}| \epsilon.
\end{equation*}
By an application of Lemma~\ref{lm:impaired dynamics norm bound}, we can obtain $\rho_1 (t)$. Repeated application of \eqref{eq:academic example hyperrectangular growth bound} and Lemma~\ref{lm:impaired dynamics norm bound} will then yield the hyperrectangular deviation bound $\rho(t)$.


\subsubsection{Changed Dynamics}

Let us now consider the case where $d(t)$ in \eqref{eq:academic example dynamics} is replaced by $\bar{d}(t)$, such that $|d_i (t) - \bar{d}_i (t)| \leq \gamma_i (t)$. Let $\dot{\tilde{x}}(t) = x(t) - \bar{x}(t) = A \tilde{x}(t) + B \tilde{u}(t) + d(t) - \bar{d}(t)$, where $x(t)$ is a solution of the nominal system, and $\bar{x}(t)$ corresponds to the off-nominal system. It then suffices to modify \eqref{eq:academic example hyperrectangular growth bound} as follows:
\begin{equation}\label{eq:academic example hyperrectangular growth bound changed dynamics}
    |\dot{\tilde{x}}_i (t)| \leq \sum_{j = 1}^{i - 1} |a_{i,j}| \rho_j (t) + |a_{i,i}| |\tilde{x}_i (t)| + \sum_{j=1}^m |b_{i,j}| \epsilon + \gamma_i (t),
\end{equation}
which is obtained by the same arguments as in Lemma~\ref{lm:impaired dynamics norm bound}.

As an illustration of the bound given in \eqref{eq:academic example hyperrectangular growth bound}, we consider the following parameters:
\begin{equation*}
\begin{split}
    A_{i,j} &= \begin{cases}
        j/i, \quad &j \leq i \\
        0, \quad &j > i
    \end{cases}, \quad B_{i,j} = j/(9 - i), \quad d_i (t) = 0.1
\end{split}
\end{equation*}

We take the admissible set of control inputs to be $\mathscr{U} = \cart_{i=1}^n [-1, 1]$, and the diminished set of control inputs is $\bar{\mathscr{U}} = \cart_{i=1}^m [-0.9, 0.9]$, so that $\epsilon = d_{\mathrm{H}} (\mathscr{U}, \bar{\mathscr{U}}) = 0.1$. We take $n = 5$, and $m = 8$. By applying the bound on the trajectory deviation growth given in \eqref{eq:academic example hyperrectangular growth bound changed dynamics}, we can compute inner-approximations to the impaired reachable set as per Theorem~\ref{thm:reachable set guaranteed underapproximation impaired general}. We consider here a final time of $t = 0.25$, and an initial set of states $\mathscr{X}_0 = \{0\}$. The results are given as length fractions of the projections in Table~\ref{tab:length fractions lower triangular system}. Using the notation of Theorem~\ref{thm:reachable set guaranteed underapproximation impaired general}, for each $i$-th Cartesian dimension, the first column gives the ratio
\begin{equation*}
    \mathrm{length}[B_i \setminus ((\partial B)_i)_{+\rho_i(t)}] / \mathrm{length}[A_i],    
\end{equation*}
    and the second column shows
    \begin{equation*}
        \mathrm{length}[B_i \setminus ((\partial B)_i)_{+\Vert\rho(t)\Vert}] / \mathrm{length}[A_i].
    \end{equation*}

It can clearly be observed that the volume fractions of the inner approximations remain reasonably tight with increasing system dimension when considering the tightness of the hyperrectangular distance bound. These results serve to demonstrate that our method is capable of producing reasonably tight approximations even for systems with increased dimension by exploiting partially decoupled system structure. In comparison, ball-based slimming used in \cite{El-Kebir2021a} yields worse results, as can be seen by comparing the fourth and fifth column of Table~\ref{tab:length fractions lower triangular system}. For instance, a hyperrectangular shrinking operations defined by $\eta = [\begin{smallmatrix}0.1 & 10 \end{smallmatrix}]\transp$ yields a sufficient ball-based shrinking operation with radius $\Vert \eta \Vert \approx 10$, which would likely shrink away most of the first Cartesian dimension in practice. A similar phenomenon can be observed when considering dimensions 1 and 3 in Table~\ref{tab:length fractions lower triangular system}; using hyperrectangular bounds prevents excessive slimming of the reachable set.

\begin{table}[b]
\caption{Projected length ratios of the lower triangular system example by dimension using hyperrectangular and ball-based slimming operations.}
\label{tab:length fractions lower triangular system}
\begin{tabularx}{\linewidth}{l | X X}
    \toprule
    Dim. & Hyperrect. inner-approx./Off-nominal length & Ball-based inner-approx./Off-nominal length \\
    \midrule
    1 & 88.3\% & 29.8\% \\
    2 & 87.7\% & 41.7\% \\
    3 & 87.0\% & 52.8\% \\
    4 & 63.8\% & 18.8\% \\
    5 & 58.0\% & 30.3\% \\
    \bottomrule
\end{tabularx}
\end{table}

\subsubsection{Computational Complexity}

To show that the theory presented in this work is scalable on systems such as \eqref{eq:academic example dynamics}, we consider the computational complexity of a basic algorithmic implementation to compute $\rho_i$ for each $i$, as well as verifying if a state is guaranteed to lie in the off-nominal reachable set. Both of these tasks are subject to hard real-time constraints in practice, making it essential to study how their computational complexity grows.

\paragraph{Computing the Trajectory Deviation Bound}

We note that we must perform numerical integration to compute $G$, $\int a(\tau) \ \textrm{d}\tau$, and $\int b(\tau) \ \textrm{d}\tau$ in the Bihari inequality \eqref{eq:Bihari inequality}. To compute the inverse $G^{-1}$, one may use a root-finding scheme or an approximate look-up table (LUT) based approach. We consider here a LUT approach.

When using a method an explicit non-adaptive numerical integration scheme such as Euler's method or Runge-Kutta, it suffices to consider an a priori set integration step $h > 0$. Let us consider the reachable set in an interval $t \in [0, T]$, and take $h = T/N$ with $N \in \mathbb{N}$. By the results from \cite{Ruhela2014}, the computational complexity of a Runge-Kutta scheme is $\mathcal{O}(N)$. Since we will need to perform three rounds of numerical integration per dimension (for $G$, and in $a$ and $b$), which together take $\mathcal{O}(N)$. We store all value of $G$ and their argument $r$ in a lookup table of size $N$. Since values can be retrieved from an array in constant time, the complexity of numerical integration to populate the LUT combined with lookup is $\mathcal{O}(1) + \mathcal{O}(N) = \mathcal{O}(N)$. We then note that this process must be repeated for all $n$ dimensions, which gives computational complexity $\mathcal{O}(n N)$. Therefore, the value of $\rho(t)$, which is instrumental in producing guaranteed inner- and outer-approximations, can be computed in linear time with respect to the system dimension $n$.

\paragraph{Verifying Reachability of a State}

We now consider the complexity of verifying whether a state lies in the computed inner approximation of the reachable set. Let us assume that we have access to a \emph{signed distance function}, $\psi : \mathbb{R}^n \to \mathbb{R}$, of the nominal reachable set at time $t$ (see, e.g., \cite[p.~811]{Katopodes2019} for more information on signed distance functions). We assume that we can evaluate $\psi$ using $N_\psi$ primitive operations. Then, to evaluate whether or not a point $x \in \mathbb{R}^n$ lies in the inner approximation of the off-nominal reachable set, it suffices to check the following:
\begin{enumerate}
    \item Check if $\psi(x) \leq 0$; we must first check if $x$ lies in the nominal reachable set. If this is false, $x$ is not guaranteed to lie in the off-nominal reachable set.
    \item Check if $\psi(x) \leq - \min_i \rho_i(t)$; we must verify that $x$ lies at least distance $\min_i \rho_i(t)$ away from the boundary of the nominal reachable set. If this is false, $x$ is not guaranteed to lie in the off-nominal reachable set.
    \begin{enumerate}
        \item Check if $\psi(x) \leq -\Vert \rho(t) \Vert$. This verification is based on the ball-based slimming operation of \eqref{eq:ball-based slimming approximation}. If this is true, $x$ is guaranteed to lie in the off-nominal reachable set. If false, continue to the next step.
        \item Perform gradient ascent on $\psi$ starting at $x$, such that we reach $x'$ that satisfies $\psi(x') = 0$. This $x'$ is the point on the boundary of the nominal reachable set that is closest to $x$. Verify whether $\rho(t) \preceq |x - x'|$. If this inequality is true, $x$ is guaranteed to lie in the off-nominal reachable set.
    \end{enumerate}
\end{enumerate}

In the above algorithm, it will take at least one evaluation of $\psi$ to verify whether $x$ is guaranteed to be in the off-nominal reachable set. Doing so requires $N_\psi$ operations, and corresponds to step 1). An evaluation of $\rho(t)$ will cost $\mathcal{O}(n)$ operations as discussed previously, which yields a complexity of $\mathcal{O}(n N_\psi)$. Evaluating the norm of $\rho(t)$ can be done in linear time as part of step 2a), but performing gradient ascent in step 2b) may require a significant number of evaluations of $\psi$. It is possible to truncate the gradient ascent algorithm based on a maximum number of evaluations of $\psi$, say $N_{\mathrm{eval}}$. Given some $x'' \in \mathbb{R}^n$ obtained after $N_{\mathrm{eval}} - 1$ evaluations of $\psi$, it is clear that $x' \in \{x''\}_{+|\psi(x'')|}$. We can check if for each $i = 1, \ldots, n$, it holds that $|x_i - x_i''| + |\psi(x'')| \leq \rho_i (t)$. If this inequality is true, then $x$ is guaranteed to lie in the off-nominal reachable set, and if not, then $x$ cannot be verified with certainty. Therefore, it is possible to verify guaranteed reachability with complexity $\mathcal{O}(n)$.
%
%

\subsection{Interconnected System}

To demonstrate the results shown in Subsection~\ref{subsec:Scalable System Example} on a different system structure, we consider a cascaded system of linear equations \cite{Saif1992}. Let there be $N \in \mathbb{N}$ interconnected systems, such that the $i$-th subsystem may only depend on its own states and inputs, as well as the states of the previous subsystem ($i-1$). The overall system thus takes the form:
\begin{equation}
    \dot{x}(t) = (A + K) x(t) + B u(t),
\end{equation}
where
\begin{equation*}
\begin{split}
    A &= \mathrm{diag}(\{ A^{(1)}, A^{(2)}, \ldots, A^{(N)}\}) \in \mathbb{R}^{(\Sigma_{i=1}^N n_i) \times (\Sigma_{i=1}^N n_i)}, \\
    B &= \mathrm{diag}(\{ B^{(1)}, B^{(2)}, \ldots, B^{(N)}\}) \in \mathbb{R}^{(\Sigma_{i=1}^N n_i) \times (\Sigma_{i=1}^N m_i)}, \\
    K &= \left[
        \begin{Blockmatrix}
            \block{1}{1}{1}{1}{0}
            \block{2}{1}{2}{1}{K^{(2)}}
            \block[]{3}{2}{3}{2}{\ddots}
            \block{4}{3}{4}{3}{K^{(N)}}
            \block{4}{4}{4}{4}{0}
        \end{Blockmatrix}
    \right] \in \mathbb{R}^{(\Sigma_{i=1}^N n_i) \times (\Sigma_{i=1}^N n_i)}.
\end{split}
\end{equation*}

In the above definition, we have $K^{(i)} \in \mathbb{R}^{n_i \times n_{i - 1}}$ for all $i = 2, \ldots, n$. For the first system, we can compute the hyperrectangular deviation bound as in \cite{El-Kebir2021a}, simply by considering the ball-based growth bound for that system. Doing so yields
\begin{equation}\label{eq:first system growth rate bound interconnected example}
    \Vert \dot{\tilde{x}}^{(1)} (t) \Vert \leq \sum_{i=1}^{n_1} \sum_{j=1}^{n_1} |A_{i,j}| \Vert \tilde{x}^{(1)}(t) \Vert + |B_{i,j}| \epsilon.
\end{equation}
From this inequality, we can compute the trajectory deviation bound $\rho(t)$ as per Corollary~\ref{corollary:generalized boundedness of trajectory deviation}. For subsequent systems, we find
\begin{equation}\label{eq:subsequent system growth rate bound interconnected example}
\begin{split}
    \Vert \dot{\tilde{x}}^{(k)} (t) \Vert &\leq \sum_{i=1}^{n_k} \sum_{j=1}^{n_{k-1}} |K_{i,j}^{(k)}| \rho^{(k-1)} (t) \\
    &+ \sum_{i=1}^{n_k} \sum_{j=1}^{n_k} |A_{i,j}| \Vert \tilde{x}^{(k)} (t) \Vert + |B_{i,j}| \epsilon,
\end{split}
\end{equation}
for $k > 1$. In case any of the constituent systems possess a decoupled structure, simplifications of the form of \eqref{eq:academic example hyperrectangular growth bound} can be made in \eqref{eq:first system growth rate bound interconnected example} or \eqref{eq:subsequent system growth rate bound interconnected example}.

\subsubsection{Numerical Example}

We consider the following system:
\begin{equation}
\begin{split}
    \dot{x}(t) &= \begin{bmatrix}
        -1 & 1 & 0 & 0 & 0 \\
        0 & -1 & 0 & 0 & 0 \\
        0 & 0 & -0.5 & 0.5 & -0.1 \\
        0 & 0 & -0.5 & -0.1 & 1 \\
        0 & 0 & 0 & 0.1 & -0.5
    \end{bmatrix} x(t) \\
    &+
    \begin{bmatrix}
        0 & 0 & 0 & 0 & 0 & 0 \\
        0 & 0 & 0 & 0 & 0 & 0 \\
        0 & 0 & 0 & 0 & 0 & 0 \\
        0.1 & 0 & 0 & 0 & 0 & 0 \\
        0 & 0.5 & 0 & 0 & 0 & 0 \\
    \end{bmatrix} x(t) +
    \begin{bmatrix}
        0.1 & 0 \\
        1 & 0 \\
        0 & 0.1 \\
        0 & 0.1 \\
        0 & 0.1
    \end{bmatrix} u(t),
\end{split}
\end{equation}
where the set of admissible control inputs is $\mathscr{U} = [-2, 2]^2$, the set of initial states is $\mathscr{X}_0 = \{0\}$. We take the impaired set of admissible control inputs to be $\bar{\mathscr{U}} = [-1.9, 1.9]^2$, such that $\epsilon = d_{\mathrm{H}} (\mathscr{U}, \bar{\mathscr{U}}) = 0.1$. Using the approach of Theorem~\ref{thm:reachable set guaranteed underapproximation impaired general}, we obtain the results as shown in Table~\ref{tab:length fractions interconnected system}.

\begin{table}[t]
\caption{Projected length ratios of the interconnected system example by dimension using hyperrectangular and ball-based slimming operations.}
\label{tab:length fractions interconnected system}
\begin{tabularx}{\linewidth}{l | X X}
    \toprule
    Dim. & Hyperrect. inner-approx./Off-nominal length & Ball-based inner-approx./Off-nominal length \\
    \midrule
    1 & 61.0\% & 34.3\% \\
    2 & 95.5\% & 89.7\% \\
    3 & 67.4\% & 0\% \\
    4 & 71.7\% & 0\% \\
    5 & 80.4\% & 13.6\% \\
    \bottomrule
\end{tabularx}
\end{table}

As can be observed in Table~\ref{tab:length fractions interconnected system}, the inner-approximations based on the hyperrectangular slimming outperform those based on ball-based slimming operations. In particular, in states 3 and 4, the ball-based slimming operations eliminate the entire reachable set, which is not the case with hyperrectangular slimming operations. The computations applied to the system in this example are scalable while preserving relatively tight bounds, provided that the system structure permits decoupling of subsystems as shown here.

\section{Conclusion}
\label{sec:conclusions}

In this work, we have introduced a new technique for efficiently computing both inner and outer approximations to a reachable set in case of changed dynamics and diminished control authority, given basic knowledge of the trajectory deviation growth as well as a precomputed nominal reachable set. This work expands on previous work by extending the theory to changes in dynamics, and lifting the assumption of convexity of the reachable sets. To obtain an inner approximation of the reachable set under diminished control authority, we have given an integral inequality that provides an upper bound on the minimal trajectory deviation between the nominal and off-nominal systems. We have extended the classical norm bound on the trajectory deviation to a hyperrectangular bound, allowing us to compute both inner and outer approximations of the off-nominal reachable set based on the nominal set, regardless of the convexity of the reachable set. Similarly to our previous results, these results can be applied online on systems at a low computational cost.

We have demonstrated our approach by three examples: a model of the heading dynamics of a vessel, a lower triangular system, and an interconnected linear system. In general, the use of a hyperrectangular growth bound is superior to a norm bound for systems that have one or more integrators. The numerical examples indicate that the use of hyperrectangular slimming operations would produce tighter inner approximations, coupled with periodic reinitialization of the reachable set. As was mentioned in previous work, the tightness of both the inner and outer approximation are strongly related to the quality of the trajectory deviation bound, as well as any additional drift that appears as part of a change in dynamics. We have shown that the ability to compute these approximations online can have practical application to control of dynamical systems in off-nominal conditions. This was shown in the second example, where the computational complexity was shown to be linear in the system dimension for a lower triangular system. Finally, in the third example, it was shown how system structure can be leveraged when dealing with interconnected systems in the context of formulating an efficient hyperrectangular growth bound that consists of several coupled ball-based growth bounds. The latter approach was shown to be applicable to larger systems, provided that it is possible to decouple some subsystems from each other.

In future work, we aim to study the utility of a bounding method based on non-axis-aligned hyperrectangles, as could be described by zonotopes, insofar as obtaining tighter growth bounds and approximations is concerned. A potential avenue for this work would lie in considering principle components of the system using singular value decomposition \cite{Amsallem2012}, or by considering the system structure itself (e.g., when the set of velocities of a system lies in a subspace). In the same direction, (normalizing) state-space transformations may also prove to be useful in obtaining tighter approximations by easing magnitude difference between states. In addition, generalized slimming and fattening operations that are based on sets that are not centered at the origin may also prove to be key to obtaining tighter approximations in the case of changes in dynamics. Finally, real-time applications of the theory presented here will be studied in future work, with a focus on safety-critical predictive control.


\appendices

\section{Generalizations to the Theory}\label{app:generalizations}

In the theory presented in Section~\ref{sec:main}, a number of assumptions can be weakened to address a larger class of dynamical systems; we present these relaxations below.

For the result of Lemma~\ref{lemma:path-connectedness of solution set}, it suffices that the multifunction $F : [0, \infty) \times \mathbb{R}^n \rightrightarrows \mathbb{R}^n$ satisfies the following properties \cite[Thm.~1, p.~1010]{Staicu2004}:
    \begin{enumerate}
        \item $F$ is $\mathscr{L} \otimes \mathscr{B}(\mathbb{R}^n)$-measurable, as defined in \cite[p.~1007]{Staicu2004};
        \item $F$ is Lipschitz with respect to $x$, i.e., there exists $l \in L^1_{\mathrm{loc}}([0, \infty), \mathbb{R})$, such that $l(t) > 0$, and for any $x, y \in \mathbb{R}^n$, it holds that
            $d_{\mathrm{H}} (F(t, x), F(t, y)) \leq l(t) \Vert x - y \Vert$
            for a.e. $t \in [0, \infty)$;
        \item There exists $\beta \in L^1_{\mathrm{loc}}([0, \infty), \mathbb{R})$ such that $d_{\mathrm{H}} (\{0\}, F(t, 0)) \leq \beta(t)$
            for a.e. $t \in [0, \infty)$.
    \end{enumerate}
If $F(t, x)$ is continuous, Lipschitz in $x$, and has closed, path-connected values, as assumed in the main part of the paper, it satisfies these assumptions. Namely, 1) is satisfied by continuity of $F$, while 2) and 3) are satisfied by the Lipschitz condition on $F(t, x)$ in $t$ and $x$.

For the claim of Proposition~\ref{prop:solution set values continuum}, it suffices that in addition to assumptions 1)--3) above, multifunction $F$ possesses the \emph{Scorza--Dragoni property} \cite[Def.~19.12, p.~91]{Gorniewicz1999}:

\begin{definition}
    A multifunction $F : [0, \infty) \times \mathbb{R}^n \rightrightarrows \mathbb{R}^n$ with closed values is said to have the \emph{Scorza--Dragoni property} if, for all $\delta > 0$, there exists a closed subset $A_\delta \subset [0, \infty)$ such that $\mu([0, \infty) \setminus A_\delta) \leq \delta$, where $\mu$ is the Lebesgue measure, and $F$ is continuous on $A_\delta \times \mathbb{R}^n$.
\end{definition}

It trivially follows that $F$ satisfies the Scorza--Dragoni property if it is continuous and has closed values.

Finally, for Lemma~\ref{lemma:connectedness of reachable set} to hold, conditions 1)--3) above and the Scorza--Dragoni property again form sufficient conditions; in its proof, the solution set $S_F$ is indeed continuous if conditions 1)--2) are met, by Corollary 4.5 in \cite{Zhu1991}.

\bibliographystyle{IEEEtran}
\bibliography{tacarxiv}

\begin{IEEEbiography}[{\includegraphics[width=1in,height=1.25in,clip,keepaspectratio]{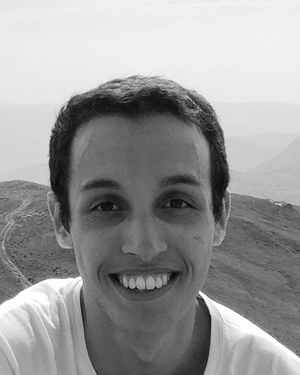}}]{Hamza El-Kebir} received the B.S. degree in
aerospace engineering from Delft University of Technology, Delft, The Netherlands, in 2020. He is currently
working toward the Ph.D. degree from the Department of Aerospace Engineering, University of Illinois Urbana-Champaign, Urbana, IL, USA. His current research interests are in safe control and estimation of systems experiencing uncertain failure modes, changes in dynamics, and degradation of control authority. He also focuses on infinite-dimensional control of distributed parameter systems for safe autonomous surgery.
\end{IEEEbiography}

\begin{IEEEbiography}[{\includegraphics[width=1in,height=1.25in,clip,keepaspectratio]{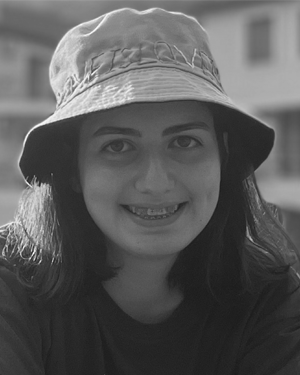}}]{Ani Pirosmanishvili} is currently working toward her B.S. degree in aerospace engineering at the Department of Aerospace Engineering at the University of Illinois Urbana-Champaign, Urbana, IL, USA. Her current research interests include estimation of the performance of autonomous systems after degradation, with applications on high-fidelity dynamic models. 
\end{IEEEbiography}

\begin{IEEEbiography}[{\includegraphics[width=1in,height=1.25in,clip,keepaspectratio]{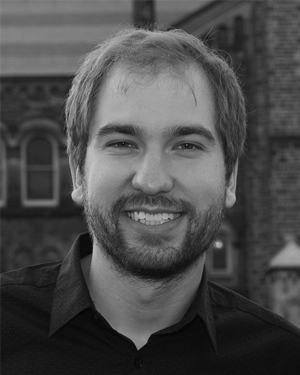}}]{Melkior Ornik} received the Ph.D. degree from the University of Toronto, Toronto, ON, Canada, in 2017. He is currently an Assistant Professor with the Department of Aerospace Engineering and the Coordinated Science Laboratory, University of Illinois Urbana-Champaign, Urbana, IL, USA. His research focuses on developing theory and algorithms for learning and planning of autonomous systems operating in uncertain, complex, and changing environments, as well as in scenarios where only limited knowledge of the system is available.
\end{IEEEbiography}

\end{document}